\newtheorem{theorem}{Theorem}[section]
\newtheorem{lemma}[theorem]{Lemma}
\newtheorem{corollary}[theorem]{Corollary}
\newtheorem{proposition}[theorem]{Proposition}
\theoremstyle{definition}
\newtheorem{definition}[theorem]{Definition}
\newtheorem{example}[theorem]{Example}
\theoremstyle{remark}
\newtheorem{remark}[theorem]{Remark}
\numberwithin{equation}{section}
\begin{document}
\title[A DIVERGENCE THEOREM FOR NON-COMPACT RIEMANNIAN MANIFOLDS]{A DIVERGENCE THEOREM FOR NON-COMPACT RIEMANNIAN MANIFOLDS: A DYNAMICAL APPROACH}
  
\author{\'Italo Melo}
\address{Departamento de
	Matem\'{a}tica-UFPI, Ininga, 
	64049-550 Teresina, Brazil}
\email{italodowell@ufpi.edu.br}

\author[Enrique Pujals]{Enrique Pujals}
\address{Instituto Nacional de Matem\'atica Pura e Aplicada\\
Estrada Dona Castorina 110, 22460-320 Rio de Janeiro, Brazil}
\email{enrique@impa.br}

\begin{abstract}
In this paper we use a dynamical approach to prove some new divergence theorems on complete non-compact Riemannian manifolds.
\end{abstract}
\maketitle

\section{Introduction and statement of results}

A divergence theorem states that $    \int_{M}  (\rm{div} X) \, d\nu_{g}    =0$, under certain assumptions on $X$ and $M$, where $M$ is a Riemannian manifold, $X$ is a vector field on $M$ and $\rm{div} X$ denotes the divergence
of $X$. The starting point is the usual divergence theorem for the case where $X$ is smooth and has compact support.

On a closed Riemannian manifold the classical divergence theorem is a very useful tool in the study of PDEs. In particular, in the study of  differential operators in divergence form, such as the Laplace-Beltrami operator, the $p$-Laplace operator and the mean curvature operator.

In this context, the divergence theorem can be used to obtain an integration by parts that facilitates the obtaining of integral  estimates of solutions of PDEs. Furthermore, it is possible to use the divergence theorem to obtain comparison results and deduce information of the geometry and the topology of this Riemannian manifold.

If a divergence theorem is available for a complete non-compact Riemannian manifold then it would be  interesting to generalize some of the existing results for compact Riemannian manifolds.

In that direction, we present a new divergence theorem for the non-compact case, with or without finite volume, based on the dynamical behavior of the geodesic flow and the integrability of a functional associated to the vector field. To precise, let us introduce some definitions.

Let $(M,g$) be a complete Riemannian manifold without boundary, we denote by $\nabla$ the Levi-Civita connection. Given a $C^1$ vector field on $M$ we can consider the continuous function $f_X$ on the unit tangent bundle $SM$ defined by 
$f_X(p,v) = g(\nabla_v X,v)$.

In the theorem below we present a new divergence  theorem on complete Riemannian manifolds without boundary whose geodesic flow is recurrent.

\begin{theorem}\label{infinito}
	Let $M$ be a non-compact complete Riemannian manifold without boundary and $X$ a $C^1$ vector field. If the geodesic flow $\phi_{t}$ is recurrent with respect to the Liouville measure and  $f_X$ is integrable on $SM$ then ${\rm div}\, X$ is integrable on $M$ and
	$$  \displaystyle\int_{M}  {\rm div}\, X \, d\nu_{g}    =0. $$
\end{theorem}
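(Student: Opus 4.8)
The plan is to convert $\int_M {\rm div}\,X\,d\nu_g$ into an integral of $f_X$ over $SM$ against the Liouville measure $\mu$, and then to show the latter vanishes by exploiting that $f_X$ is the derivative of a \emph{continuous} function along the geodesic flow, combined with recurrence.

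\emph{Step 1 (reduction to $SM$).} Disintegrate the Liouville measure as $d\mu(p,v)=d\theta_p(v)\,d\nu_g(p)$, where $\theta_p$ is the canonical measure of total mass $\omega_{n-1}={\rm vol}(\mathbb S^{n-1})$ on the unit sphere $S_pM\subset T_pM$ and $n=\dim M$. For fixed $p$ the map $v\mapsto\nabla_vX$ is the linear endomorphism $\nabla X$ of $T_pM$, and the elementary spherical identity $\int_{\mathbb S^{n-1}}v_iv_j\,d\theta=\tfrac{\omega_{n-1}}{n}\delta_{ij}$ gives $\int_{S_pM}f_X(p,v)\,d\theta_p(v)=\tfrac{\omega_{n-1}}{n}\,{\rm tr}(\nabla X)(p)=\tfrac{\omega_{n-1}}{n}\,{\rm div}\,X(p)$. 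Since $f_X\in L^1(SM,\mu)$, Tonelli applied to $|f_X|$ together with the pointwise bound $\bigl|\int_{S_pM}f_X\,d\theta_p\bigr|\le\int_{S_pM}|f_X|\,d\theta_p$ shows that ${\rm div}\,X\in L^1(M,\nu_g)$, and Fubini yields
$$\frac{\omega_{n-1}}{n}\int_M{\rm div}\,X\,d\nu_g=\int_{SM}f_X\,d\mu.$$
It therefore suffices to prove $\int_{SM}f_X\,d\mu=0$.

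\emph{Step 2 ($f_X$ as a flow derivative, and recurrence).} Put $h(p,v)=g(X_p,v)$, a continuous function on $SM$. If $\gamma$ is the geodesic with $\gamma(0)=p$, $\gamma'(0)=v$, then $\frac{d}{dt}h(\phi_t(p,v))=g(\nabla_{\gamma'}X,\gamma')+g(X,\nabla_{\gamma'}\gamma')=f_X(\phi_t(p,v))$ since $\nabla_{\gamma'}\gamma'=0$; hence for every $x\in SM$ and $T>0$,
$$H_T(x):=h(\phi_Tx)-h(x)=\int_0^T f_X(\phi_sx)\,ds.$$
Because $\phi_t$ is recurrent for $\mu$, for $\mu$-a.e.\ $x$ there are $t_k\to\infty$ with $\phi_{t_k}x\to x$, and continuity of $h$ gives $H_{t_k}(x)\to0$; thus $\liminf_{T\to\infty}\abs{H_T(x)}=0$ for $\mu$-a.e.\ $x$.

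\emph{Step 3 (conclusion via the ratio ergodic theorem).} Suppose $\int_{SM}f_X\,d\mu\ne0$; replacing $X$ by $-X$ we may assume it is positive. Pick $g_0\in L^1(SM,\mu)$ with $g_0>0$ (possible since $\mu$ is $\sigma$-finite). Recurrence entails conservativity of $\phi_t$, so $\int_0^Tg_0(\phi_sx)\,ds\to\infty$ for $\mu$-a.e.\ $x$, and Hopf's ratio ergodic theorem for the conservative measure-preserving flow $\phi_t$ gives
$$\frac{H_T(x)}{\int_0^Tg_0(\phi_sx)\,ds}\ \xrightarrow[T\to\infty]{}\ \frac{E[f_X\mid\mathcal I](x)}{E[g_0\mid\mathcal I](x)}\qquad\mu\text{-a.e.},$$
where $\mathcal I$ is the invariant $\sigma$-algebra and $E[g_0\mid\mathcal I]>0$ a.e. On the set $\{E[f_X\mid\mathcal I]>0\}$ this forces $H_T(x)\to+\infty$, contradicting Step 2; hence $E[f_X\mid\mathcal I]\le0$ a.e., so $\int_{SM}f_X\,d\mu=\int_{SM}E[f_X\mid\mathcal I]\,d\mu\le0$, a contradiction. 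Thus $\int_{SM}f_X\,d\mu=0$, and by Step 1, $\int_M{\rm div}\,X\,d\nu_g=0$. (When $\mu(SM)<\infty$ this last step can be replaced by a direct use of Birkhoff's theorem.)

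\emph{Main obstacle.} The delicate case is infinite volume. There $h$ need not be integrable, so one cannot simply say $\int_{SM}(h\circ\phi_T-h)\,d\mu=0$, and the Birkhoff averages $\tfrac1TH_T$ tend to $0$ a.e.\ and carry no information about $\int_{SM}f_X\,d\mu$; the role of the ratio ergodic theorem is precisely to bridge this gap, using the recurrence bound $\liminf\abs{H_T}=0$ as the only quantitative input. One also has to record that ``recurrent with respect to the Liouville measure'' provides both conservativity and pointwise return to neighbourhoods, which is unambiguous since $SM$ is a second-countable manifold and $\mu$ is a Radon measure positive on open sets.
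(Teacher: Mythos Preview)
Your proof is correct and shares Steps 1 and 2 with the paper: Step~1 is exactly Lemma~\ref{es} together with the Fubini theorem for Riemannian submersions (Theorem~\ref{FU}), and Step~2 is the same coboundary identity $\int_0^T f_X\circ\phi_s\,ds = h(\phi_T\cdot)-h(\cdot)$ combined with recurrence. The difference lies in Step~3. The paper does not invoke the ratio ergodic theorem; instead it uses the \emph{Maximal Ergodic Theorem}: one fixes a strictly positive integrable function $\tilde h$ on $SM$ (the lift of a positive $L^1$ function on $M$); for a recurrent point $\theta$ one has $H_{t_n}(\theta)\to0$ while $\int_0^{t_n}\tilde h\circ\phi_s\,ds>0$, so $\sup_{T>0}\int_0^T(f_X+\tilde h)\circ\phi_s\,ds>0$ for a.e.\ $\theta$, whence the maximal inequality gives $\int_{SM}(f_X+\tilde h)\,d\mu\ge0$; replacing $\tilde h$ by $\tilde h/k$ and letting $k\to\infty$, then repeating with $-X$, yields $\int_{SM}f_X\,d\mu=0$. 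Your route via Hopf's ratio theorem is equally valid and makes the contradiction with $\liminf|H_T|=0$ very transparent; one small caveat is that in infinite measure the notation $E[f_X\mid\mathcal I]$ is a shorthand---the rigorous statement of the ratio theorem is that the limit $L$ is $\mathcal I$-measurable with $\int_A L\,g_0\,d\mu=\int_A f_X\,d\mu$ for all $A\in\mathcal I$, from which $L\le0$ a.e.\ and $g_0>0$ give $\int f_X\le0$ by taking $A=SM$. The paper's approach is slightly more elementary (the maximal inequality rather than the full ratio theorem) and sidesteps that notational subtlety; yours trades that for a more direct contradiction argument.
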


In particular, if $M$ has finite volume we get the following corollary.

\begin{corollary}\label{finito}
	Let M be a non-compact complete Riemannian manifold without boundary  with finite volume
and $X$ a  $C^1$ vector field on M. If $f_X$ is integrable on SM then ${\rm div}\, X$ is integrable on $M$ and

$$  \displaystyle\int_{M} {\rm div}\, X \, d\nu_{g}    =0.$$
\end{corollary}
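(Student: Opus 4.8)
The plan is to obtain Corollary~\ref{finito} as an immediate consequence of Theorem~\ref{infinito}: since $f_X$ is assumed integrable on $SM$, the only hypothesis of that theorem that still needs to be checked is that the geodesic flow $\phi_t$ is recurrent with respect to the Liouville measure, and I would argue that this is automatic once $\nu_g(M)<\infty$.

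To see this, first recall that the Liouville measure $\mu$ on $SM$ disintegrates over $(M,\nu_g)$, the fiber over $p$ being the unit sphere $S_pM\subset T_pM$ with its round measure of total mass $\omega_{n-1}:=\mathrm{vol}(\mathbb{S}^{n-1})$, where $n=\dim M$; hence $\mu(SM)=\omega_{n-1}\,\nu_g(M)<\infty$, so $(SM,\mu)$ is a finite measure space. Since the geodesic flow preserves $\mu$ (by Liouville's theorem for geodesic flows), the Poincaré recurrence theorem applies, so $\mu$-almost every point of any set of positive $\mu$-measure returns to that set at a sequence of times tending to $+\infty$; because $SM$ is second countable, applying this to a countable neighborhood basis shows that $\mu$-a.e.\ $(p,v)\in SM$ admits a sequence $t_k\to\infty$ with $\phi_{t_k}(p,v)\to(p,v)$. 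Thus $\phi_t$ is recurrent with respect to $\mu$, and Theorem~\ref{infinito} applies verbatim, yielding both that $\mathrm{div}\,X$ is integrable on $M$ and that $\int_M \mathrm{div}\,X\,d\nu_g=0$.

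I do not expect a genuine obstacle here, since all the substance is contained in Theorem~\ref{infinito} and the finite-volume hypothesis merely supplies recurrence for free. It is perhaps worth noting that in this regime one can also give a proof bypassing Theorem~\ref{infinito}: with $\mu(SM)<\infty$ one may apply Birkhoff's ergodic theorem directly to $f_X\in L^1(SM,\mu)$ — rather than the ratio ergodic theorem required in infinite measure — using the identity $\frac1T\int_0^T f_X(\phi_t(p,v))\,dt=\frac1T\big(h_X(\phi_T(p,v))-h_X(p,v)\big)$ with $h_X(p,v)=g(X_p,v)$ continuous on $SM$; along the recurrence times $t_k$ the right-hand side tends to $0$, so the Birkhoff average of $f_X$ vanishes $\mu$-a.e.\ and therefore $\int_{SM}f_X\,d\mu=0$, after which integrating over the fibers of $SM\to M$, where $\int_{S_pM}g(\nabla_v X,v)\,d\theta_p(v)=\tfrac{\omega_{n-1}}{n}\,\mathrm{div}\,X(p)$, gives the integrability of $\mathrm{div}\,X$ and $\int_M\mathrm{div}\,X\,d\nu_g=0$.
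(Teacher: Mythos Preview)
Your proposal is correct and follows exactly the paper's own argument: the authors simply note that finite volume of $M$ makes the Liouville measure finite, so Poincar\'e recurrence forces the geodesic flow to be recurrent, and then Theorem~\ref{infinito} applies directly. Your additional remark sketching a Birkhoff-based shortcut is a nice aside but not needed for the corollary as stated.
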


If $(M,g)$ is a non-compact Riemannian manifold whose geodesic flow is not necessarily recurrent then using the E. Hopf's decomposition and an additional hypothesis we get the following result.

\begin{theorem}\label{infinito2}
	Let $M$ be a non-compact complete Riemannian manifold without boundary and $X$ a $C^1$ vector field. If $f_X$ is integrable on $SM$ and $|X| \rightarrow 0$ uniformly at infinity in $M$ then ${\rm div}\, X$ is integrable on $M$ and
	$$  \displaystyle\int_{M}  {\rm div}\, X \, d\nu_{g}    =0. $$
\end{theorem}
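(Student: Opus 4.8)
The plan is to reduce the theorem to the single assertion $\int_{SM} f_X\,d\mathcal{L}=0$, where $\mathcal{L}$ denotes the Liouville measure on $SM$, and then to analyze this integral by means of E.~Hopf's conservative--dissipative decomposition of the geodesic flow. For the reduction, observe that for each $p\in M$ the map $v\mapsto\nabla_v X$ is a linear endomorphism of $T_pM$ whose trace is $({\rm div}\,X)(p)$, and the elementary formula $\int_{S^{n-1}}\langle Av,v\rangle\,d\theta(v)=\tfrac{1}{n}\mathrm{vol}(S^{n-1})\,\mathrm{tr}\,A$ (with $n=\dim M$) gives $\int_{S_pM}f_X(p,v)\,d\theta_p(v)=c_n\,({\rm div}\,X)(p)$ for a positive constant $c_n$. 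Since the Liouville measure projects to a constant multiple of $\nu_g$ with fibrewise-uniform conditional measures and $f_X\in L^1(SM,\mathcal{L})$, Tonelli's theorem yields ${\rm div}\,X\in L^1(M)$ and Fubini's theorem yields $\int_{SM}f_X\,d\mathcal{L}=c_n\int_M{\rm div}\,X\,d\nu_g$; so it suffices to show the left-hand side vanishes.

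The dynamical identity underpinning everything is that, for the geodesic $\gamma$ with initial condition $(p,v)$, one has $\frac{d}{dt}F(\phi_t(p,v))=g(\nabla_{\dot\gamma}X,\dot\gamma)=f_X(\phi_t(p,v))$, where $F(q,w):=g(X(q),w)$, because $\gamma$ has zero acceleration; equivalently $F(\phi_t x)-F(x)=\int_0^t f_X(\phi_s x)\,ds$ for every $x\in SM$ and $t\in\mathbb{R}$. Note that $F$ is continuous and $|F(q,w)|\le|X(q)|$. Now write $SM=\mathcal{C}\sqcup\mathcal{D}$ (modulo $\mathcal{L}$-null sets) for the conservative and dissipative parts of $\phi_t$ with respect to $\mathcal{L}$; both are flow invariant. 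On $\mathcal{C}$ the induced flow is conservative, hence recurrent by Poincar\'e's recurrence theorem, and the argument proving Theorem \ref{infinito}, which uses only recurrence together with the continuous cocycle identity above, applies to the subsystem $(\mathcal{C},\mathcal{L}|_{\mathcal{C}},\phi_t)$ and gives $\int_{\mathcal{C}}f_X\,d\mathcal{L}=0$. The whole question therefore reduces to the dissipative part, and it is here that the hypothesis $|X|\to0$ at infinity enters.

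On $\mathcal{D}$ the first step is to show that for $\mathcal{L}$-a.e.\ $x\in\mathcal{D}$ the orbit $\{\phi_t x\}_{t}$ leaves every compact subset of $SM$ as $|t|\to\infty$. If this failed on a set of positive $\mathcal{L}$-measure, then --- using that geodesics have unit speed, so such an orbit cannot escape a fixed compact neighbourhood in bounded time, together with properness of the projection $\pi:SM\to M$ --- one would produce a compact subset of $SM$ on which a positive-measure set of $\mathcal{D}$-orbits spend infinite total time, contradicting the defining property of the dissipative part (for every $h\in L^1_+(\mathcal{L})$ one has $\int_0^\infty h(\phi_s x)\,ds<\infty$ for a.e.\ $x\in\mathcal{D}$, and indicator functions of compacts lie in $L^1_+$ because $\mathcal{L}$ is Radon); the assertion as $t\to-\infty$ follows since $\mathcal{D}$ is invariant under geodesic time reversal. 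Because $\pi$ is proper, such an orbit also escapes to infinity in $M$, so $|X|\to0$ uniformly at infinity forces $F(\phi_t x)\to0$ as $t\to\pm\infty$ for a.e.\ $x\in\mathcal{D}$; hence $\int_{-T}^{T}f_X(\phi_s x)\,ds=F(\phi_T x)-F(\phi_{-T}x)\to0$. Finally, invoking the structure of a dissipative flow with $\sigma$-finite invariant measure --- a transversal $(Y,\lambda)$ for which $\int_{\mathcal{D}}h\,d\mathcal{L}=\int_{Y}\big(\int_{\mathbb{R}}h(\phi_t y)\,dt\big)\,d\lambda(y)$ for all $h\in L^1(\mathcal{L}|_{\mathcal{D}})$ --- and applying Fubini to $f_X\in L^1$ (so that $t\mapsto f_X(\phi_t y)\in L^1(\mathbb{R})$ for $\lambda$-a.e.\ $y$, with integral over $\mathbb{R}$ equal to the symmetric limit just computed, namely $0$), one gets $\int_{\mathcal{D}}f_X\,d\mathcal{L}=0$. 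Adding the contributions of $\mathcal{C}$ and $\mathcal{D}$ yields $\int_{SM}f_X\,d\mathcal{L}=0$, and therefore $\int_M{\rm div}\,X\,d\nu_g=0$.

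I expect the main obstacle to be the dissipative part: specifically, setting up and justifying the integration formula over $\mathcal{D}$ via a genuine transversal/disintegration of the (possibly infinite) Liouville measure, and making the escape-to-infinity argument precise through the interplay of properness of $\pi$, unit-speed geodesics and the Hopf characterization of $\mathcal{D}$. By contrast, the reduction via fibre integration and the treatment of the conservative part (granted Theorem \ref{infinito}) should be comparatively routine.
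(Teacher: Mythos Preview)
Your overall strategy coincides with the paper's: reduce to $\int_{SM}f_X=0$ via fibre integration (the paper's Lemma~\ref{es} and Theorem~\ref{FU}), use the Hopf decomposition, invoke the recurrence argument of Theorem~\ref{infinito} on the conservative part, and on the dissipative part exploit the cocycle identity $\int_{-s}^{s}f_X(\phi_t\theta)\,dt=F(\phi_s\theta)-F(\phi_{-s}\theta)$ together with $|X|\to0$ at infinity and a transversal/orbit-space integration formula (the paper's Proposition~\ref{id}).

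There is, however, a genuine gap in your handling of the dissipative side. You write $SM=\mathcal{C}\sqcup\mathcal{D}$ for the Hopf decomposition of the \emph{forward} flow and then assert that ``the assertion as $t\to-\infty$ follows since $\mathcal{D}$ is invariant under geodesic time reversal.'' This is not justified: the flip $\sigma(p,v)=(p,-v)$ conjugates $\phi_t$ to $\phi_{-t}$ and preserves $\mathcal{L}$, so it sends $\mathcal{D}=D^{+}$ to $D^{-}$, not to itself; in general $D^{+}\neq D^{-}$. Consequently a point of $D^{+}$ may very well lie in $C^{-}$ and have a backward orbit that does \emph{not} escape to infinity, so you cannot conclude $F(\phi_{-T}x)\to0$ on all of $\mathcal{D}$. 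Your orbit-integral formula and the vanishing of the bilateral integral are therefore only valid on $D:=D^{+}\cap D^{-}$.

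The paper fixes exactly this by using a three-piece decomposition $SM=D\cup C^{+}\cup(C^{-}\setminus C^{+})$. On $C^{+}$ the recurrence argument for the forward flow gives $\int_{C^{+}}f_X=0$; on $C^{-}\setminus C^{+}$ the same argument applied to the inverse flow (backward recurrence) gives $\int_{C^{-}\setminus C^{+}}f_X=0$; and on $D=D^{+}\cap D^{-}$ one has two-sided escape to infinity, so $|X|\to0$ forces $\int_{-\infty}^{\infty}f_X(\phi_t\theta)\,dt=0$ and Proposition~\ref{id} yields $\int_{D}f_X=0$. Replacing your two-part split by this three-part one repairs the argument with no further change.
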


The essential tool in the proofs is the Maximal Ergodic Theorem (see section \ref{ergodic sec}); similar approach was already used in \cite{flo:gui} and \cite{yau:yau}.

 It is worth mentioning that there exist Riemannian manifolds with infinite volume whose geodesic flow is recurrent with respect to Liouville measure; an example of this type of manifold is the hyperbolic surface of divergence type.

	A hyperbolic surface is a complete, two dimensional Riemannian manifold with constant curvature $-1$. This surface has the unit disc as universal cover and can be viewed as $  H / \Gamma $ where $H$ is the unit disc equipped with the hyperbolic metric and $\Gamma$ is the covering group of isometries of $H$. 

In 1939, Hopf, proved that geodesic flows on hyperbolic surfaces of infinite area are either totally dissipative or recurrent and ergodic. Nicholls (see \cite{p:nichols}) proved that the geodesic flow on the hyperbolic surface $H / \Gamma$ is recurrent and ergodic if and only if the Poincar\'e series of $\Gamma$ diverges at $s=1$, this result was also proved by Aaronson and Sullivan in \cite{a:s}. The hyperbolic surfaces whose Poincar\'e series of $\Gamma$ diverges at $s=1$ are called of divergence type.

Other type of examples go as follows: given a  closed subset  $\Lambda$ of the Riemann sphere $\hat{\mathbb{C}}$ containing at least 3 points; it is known that the Riemann surface $M = \hat{\mathbb{C}} \backslash \Lambda$ has universal cover $\mathbb{H}^2$, where $\mathbb{H}^2$ is the hyperbolic space and therefore admits a hyperbolic metric.  One knows that the hyperbolic area of $M$ is infinite once $\Lambda$ is infinite. Furthermore, if $\Lambda$ has logarithmic capacity zero for every positive measure $\mu$ on $\Lambda$ then the geodesic flow on $M$ is ergodic and hence $M$ is a hyperbolic surface of divergence type. For more details see \cite{a:s}. 

So we have many examples of hyperbolic surfaces with infinite volume whose geodesic flow is ergodic and recurrent.

In \cite{a:d}, Denker and Aaronson also construct other examples of Riemannian manifolds with infinite volume whose geodesic flow is recurrent.

For the case of zero curvature, the geodesic flow associated to  ``infinite staircase’ square tiled surface (which provide a non-compact surface with infinite volume and planar curvature)  it is also recurrent (see for instance \cite{hhw:HHW}).

Similar  type of divergences results were already obtained. 
In \cite{dihe:newdir}, Gaffney proved that if $X$ is a $C^1$ vector field such that $|X| \in L^1(M)$ and  ${\rm div}\, X$ has an integral (i.e either ($    {\rm div}\, X)^{+}$ or  ($ {\rm div}\, X )^{-}$ is integrable) then $    \int_{M} {\rm div}\, X d\nu_{g}    =0    $. This result was generalized by Karp in \cite{l:karp} with a weaker hypothesis about the norm of the vector field $X$. Similar results for the non-complete case were also obtained for $p-$parabolic Riemmanian manifolds (see details in section \ref{classical div}).

The main difference of the above mentioned results with   our  new divergence theorems is that  we assume an integrability condition on the function $f_X$ but not necessarily in the norm of the vector field, which is the common hypothesis in the results proved somewhere else.

In fact, using either surfaces of revolution or the classical construction of warped product, we prove in section \ref{example} that there are  examples (with finite and infinite volume) that satisfy that $f_X$ is integrable  but $|X|$ is not integrable. For that it is used proposition \ref{suficiente} (see section \ref{sec-suficiente}) that  provides  a sufficient condition under which the function $f_X$ is integrable (it is useful to compare the statement of above mentioned proposition with the hypothesis of the theorem due by Karp (see theorem \ref{teokarp}).

At the end of the paper, we provide some applications to potential theory.


\section{Notation and Preliminaries}
In this section we present the notations and some preliminaries results that will be used in the sequel. Throughout the rest of the
paper, $(M,g)$ will denote a Riemannian manifold without boundary with dimension $n \geq 2$.
\subsection{Fubini's Theorem for  the case of Riemannian submersions}
Let $\nu_g$ be the Riemannian measure or volume measure associated to Riemannian manifold $(M,g)$. Now consider a Riemannian submersion $\pi:(M,g) \rightarrow (N,h)$. Recall that for each $q \in \pi(M)$, $\pi^{-1}(q)$ is an $(n-m)$-dimensional submanifold, which carries the Riemannian measure $\nu_{g_q}$ with respect to the metric $g_q$ induced on $\pi^{-1}(q)$ from $g$. For a function $f$ defined over $M$, we set $f_q $ the restriction of $f$ to  $\pi^{-1}(q)$. Now if $f_q$ is an integrable function on $\pi^{-1}(q)$ with  respect to $\nu_{g_q}$, define 
$$\overline{f}(q) = \displaystyle\int_{\pi^{-1}(q)} f_q \, d\nu_{g_q}.$$

We conclude the subsection stating a result that will be used in the next section.

\begin{theorem} \label{FU}(Theorem 5.6, p. 66, \,\cite{sakay:sakay}) Let $\pi:(M,g) \rightarrow (N,h)$ be a surjective Riemannian submersion. If f is a real-valued continuous function with compact support (resp., an integrable function ) on M, then $\overline{f}$ is a continuous function with compact support  on N (resp., $f_q$ is integrable for almost all $q \in N$ and $\overline{f}$ is a integrable function on N ), and
	$$   \displaystyle\int_{M} f \, d\nu_g =  \displaystyle\int_{N} \overline{f} \, d\nu_h.$$

For more details see \cite{sakay:sakay}.	
\end{theorem}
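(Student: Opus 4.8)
The plan is to establish the formula first for continuous $f$ with compact support, where everything can be localized and reduced to the classical Fubini theorem, and then to bootstrap to general integrable $f$ by approximation. The heart of the matter is a pointwise factorization of the Riemannian volume density coming from the defining property of a Riemannian submersion, namely that $d\pi_p$ restricts to a linear isometry from the horizontal space $H_p = (\ker d\pi_p)^{\perp}$ onto $T_{\pi(p)}N$.

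First I would fix $p_0 \in M$ and choose coordinates adapted to the submersion: by the local submersion theorem there is a chart in which $\pi$ is the projection $(x,y) \mapsto y$, with $x = (x^1,\dots,x^{n-m})$ parametrizing the fibers and $y = (y^1,\dots,y^m)$ pulled back from a chart on $N$. The vertical space is spanned by the $\partial_{x^a}$, so writing the metric in block form
$$ g = \begin{pmatrix} A & B \\ B^{\mathsf T} & C \end{pmatrix}, \qquad A_{ab} = g(\partial_{x^a},\partial_{x^b}), \quad C_{ij} = g(\partial_{y^i},\partial_{y^j}), $$
the block $A$ is exactly the induced fiber metric, so $\sqrt{\det A}\,dx$ is the fiber volume element $d\nu_{g_q}$. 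The Schur complement identity gives $\det g = \det A \cdot \det\bigl(C - B^{\mathsf T} A^{-1} B\bigr)$, and a direct computation shows that $C - B^{\mathsf T} A^{-1} B$ is precisely the quadratic form induced by $g$ on the horizontal quotient $T_pM/\ker d\pi_p$. Because $\pi$ is a Riemannian submersion this quotient form is carried isometrically by $d\pi$ onto $h$, and since $d\pi(\partial_{y^i}) = \partial_{y^i}$ on $N$ we obtain $C - B^{\mathsf T} A^{-1} B = (h_{ij})$ in the $y$-coordinates. Hence $\sqrt{\det g} = \sqrt{\det A}\,\sqrt{\det h}$, i.e. locally $d\nu_g = d\nu_{g_q}\,d\nu_h$. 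Verifying this factorization is the main obstacle, since it is the one place where the Riemannian (rather than merely smooth) submersion hypothesis is genuinely used.

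With the decomposition in hand I would finish the compact-support case as follows. Using a partition of unity subordinate to finitely many such charts covering $\operatorname{supp} f$, linearity reduces the formula to a single chart, where it becomes the classical Fubini theorem applied to $f\sqrt{\det g} = (f\sqrt{\det A})\sqrt{\det h}$. That $\overline f$ is supported in the compact set $\pi(\operatorname{supp} f)$ is immediate, and its continuity follows from the continuity of $f$ together with the continuous dependence of the fiber metrics on $y$, via dominated convergence using the uniform bounds afforded by compact support.

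Finally I would extend to integrable $f$ by a standard measure-theoretic argument. For nonnegative $f$, approximate from below by continuous compactly supported functions and apply monotone convergence on both sides; this simultaneously yields that $f_q$ is integrable for almost every $q$, that $\overline f$ is measurable and integrable, and that the identity persists, and the general case follows by splitting $f = f^{+} - f^{-}$. I note that the entire statement can alternatively be deduced in one stroke from the coarea formula, whose normal Jacobian equals $1$ for a Riemannian submersion, precisely by the isometry property exploited above.
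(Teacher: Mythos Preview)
Your proof is correct and follows the standard route (adapted coordinates, Schur-complement factorization of the volume density, classical Fubini, then approximation), which is essentially the argument in Sakai's book. Note, however, that the paper itself does not give a proof of this statement: it simply quotes the theorem and refers the reader to \cite{sakay:sakay}, so there is nothing in the paper to compare against beyond that citation.
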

\subsection{Geodesic flow }
Let $(M,g$) be a complete Riemannian manifold. We denote by $\gamma_{\theta}(t)$ the unique geodesic with initial conditions 
$\gamma_{\theta}(0)=p$ and $\gamma_{\theta}'(0)=v$, where $\theta=(p,v)$ is a point in the tangent bundle $TM$. For a given $t\in \mathbb{R}$, we 
define the following diffeomorphism, $\phi_t:TM \to TM $ with  $\phi_t(\theta)=(\gamma_{\theta}(t),\gamma_{\theta}'(t))$.


The family of diffeomorphism $\phi_t$ is in fact a flow (called \textit{geodesic flow}) once
it satisfies $\phi_{t+s}=\phi_t\circ\phi_s$. 

Let $SM$ be the unit tangent bundle of $M$, i.e, the subset of $TM$ given by those 
pairs $\theta=(p,v)$ such that $v$ has norm one. Since geodesics travel with constant speed, 
we see that $\phi_t$ leaves $SM$ invariant, that is, given $\theta\in SM$ then for 
every $t\in\mathbb{R}$ we have that $\phi_t(\theta)\in SM$.

Let $\pi:TM \rightarrow M$ be the canonical projection, where $\pi(\theta) = p$. 
\begin{definition}
	There exists a canonical subbundle of $TTM$ called the vertical subbundle whose fiber at $\theta$ is given by $V(\theta) = \ker(d_{\theta}\pi)$.
\end{definition}

We shall define the connection map $K: TTM \rightarrow TM$ as follows. Let $\xi \in T_{\theta}TM$ and $z:(-\epsilon,\epsilon) \rightarrow TM$  be a curve adapted to $\xi$, i.e, $z(0) = \theta$ and $z'(0) = \xi$, where 
$z(t) = (\alpha(t),Z(t))$,


$$K_{\theta}(\xi) = \displaystyle\frac{DZ}{dt}(0).$$

The map $K_{\theta}$ is well defined and is linear for each $\theta$.
\begin{definition}
	The horizontal subbundle is the subbundle of $TTM$ whose fiber at $\theta$ is given by $H(\theta) = \ker (K_\theta)$.
\end{definition}
For each $\theta$, the maps $d_{\theta}\pi|_{H(\theta)}: H(\theta) \rightarrow T_pM$ and $K_{\theta}|_{V(\theta)}:V(\theta) \rightarrow T_pM$ are linear isomorphisms. Furthermore, $T_{\theta}TM = H(\theta) \oplus V(\theta)$ and the map \linebreak $j_{\theta}:T_{\theta}TM \rightarrow T_pM \times T_pM$ given by 
$$ j_{\theta}(\xi) = (d_{\theta}\pi(\xi),K_{\theta}(\xi)),         $$
is a linear isomorphism. 
\begin{definition}
	Using the decomposition $T_{\theta}TM = H(\theta) \oplus V(\theta)$, we can define in a natural way  a Riemannian metric on $TM$ that makes $H(\theta)$ and $V(\theta)$ orthogonal. This metric is called the Sasaki metric and is given by
	$$ g_{\theta}^S(\xi,\eta) = g_{\pi(\theta)}(d_{\theta}\pi(\xi), d_{\theta}\pi(\eta)  )  +  g_{\pi(\theta)}(K_{\theta}(\xi),    K_{\theta}(\eta)       ) $$
\end{definition}



Now consider the Sasaki metric restricted to the unit tangent bundle $SM$. The projection $\pi:SM \rightarrow M$ is a surjective Riemannian submersion. Furthermore, the geodesic flow in $SM$ preserves the Riemannian measure $\nu_{g^S}$ induced by the Sasaki metric. This measure coincides with the Liouville measure up to a constant. When $M$ has finite volume the Liouville measure is finite. For more details see \cite{gabriel:paternain}.





\subsection{Warped products}
Let $B,F$ be Riemannian manifolds, with metrics $g_B$ and $g_F$, respectively and $f  >  0$  be a smooth function on $B$. The  warped product  $M  =  B  \times_{f}  F$
is the product manifold  $B  \times  F$  furnished with the Riemannian  metric 
$$    g = \pi_B^{*}( g_B) + (f \circ \pi_B)^2  \pi_F^{*} (g_F),                             $$
where    $\pi_B$  and  $\pi_F$  are the projections  of  $B  \times  F$  onto $B$ and  $F$, respectively.

Let $X$ be  a vector field on $B$. The  horizontal lift of $X$ to $B\times_f F$ is the vector field $\overline{X}$ such that $d{\pi_B}_{(p,q)}(\overline{X}(p,q)) = X(p)$ and $d{\pi_F}_{(p,q)}(\overline{X}(p,q)) = 0$. If $Y$ is a vector field on $F$, the vertical lift of $Y$ to $B\times F$ is the vector field $\overline{Y}$ such that $d{\pi_B}_{(p,q)}(\overline{Y}(p,q)) = 0$ and $d{\pi_F}_{(p,q)}(\overline{Y}(p,q)) = Y(q)$. The set of all such lifts are denoted as usual by $\mathcal{L}(B)$ and $\mathcal{L}(F)$, respectively.

We denote by  $\nabla$,$\nabla^B$ and $\nabla^{F}$ the Levi-Civita connections on $B \times_f F$, $B$ and $F$, respectively.

\begin{proposition}\label{conexao}(Proposition 35, p. 206, \,\cite{o:neil})
	On  $M  =  B  \times_f  F$,  if  $\overline{X}, \overline{Y} \in \mathcal{L}(B)$ and  $\overline{U}, \overline{V} \in \mathcal{L}(F)$ then
	\begin{enumerate}
		\item  $\nabla_{\overline{X}}\overline{Y} = \overline{\nabla_X^B Y},   $\\
		\item  $\nabla_{\overline{U}}\overline{X} = (Xf /f)\overline{U},$\\
		\item $d\pi_B( \nabla_{\overline{U}}\overline{V})= -(g(U,V)/f) \cdot {\rm grad}\, f$,\\
		\item $d\pi_{F}( \nabla_{\overline{U}}\overline{V} ) = \nabla_{U}^{F} V.$
		\end{enumerate}
\end{proposition}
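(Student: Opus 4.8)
\noindent\emph{Sketch of the argument.} The plan is to deduce all four identities from the Koszul formula for the Levi--Civita connection $\nabla$ of $g$,
$$2g(\nabla_Y Z,W) = Yg(Z,W) + Zg(Y,W) - Wg(Y,Z) + g([Y,Z],W) - g([Z,W],Y) + g([W,Y],Z),$$
evaluated on horizontal and vertical lifts, using that at each point $(p,q)$ the horizontal lifts together with the vertical lifts span $T_{(p,q)}(B\times_f F)$, so that a tangent vector is determined by its $g$-inner products against all such lifts.

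The first step is to record the elementary behaviour of lifts. Under the identification $T_{(p,q)}(B\times_f F)\cong T_pB\oplus T_qF$, a horizontal lift $\overline{X}$ is $\pi_B$-related to $X$ and $\pi_F$-related to $0$, while a vertical lift $\overline{U}$ is $\pi_B$-related to $0$ and $\pi_F$-related to $U$; since brackets of related vector fields are related, this gives $[\overline{X},\overline{Y}] = \overline{[X,Y]}$, $[\overline{X},\overline{U}] = 0$ and $[\overline{U},\overline{V}] = \overline{[U,V]}$, the brackets on the right being taken in $B$, resp.\ in $F$. From the definition of the warped metric one has $g(\overline{X},\overline{Y}) = g_B(X,Y)\circ\pi_B$, $g(\overline{X},\overline{U}) = 0$ and $g(\overline{U},\overline{V}) = (f\circ\pi_B)^2\,\bigl(g_F(U,V)\circ\pi_F\bigr)$. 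Finally, for $h\in C^\infty(B)$ one has $\overline{X}(h\circ\pi_B) = (Xh)\circ\pi_B$ and $\overline{U}(h\circ\pi_B) = 0$, and for $k\in C^\infty(F)$ one has $\overline{U}(k\circ\pi_F) = (Uk)\circ\pi_F$ and $\overline{X}(k\circ\pi_F) = 0$; in particular $\overline{X}\bigl((f\circ\pi_B)^2\bigr) = 2(f\circ\pi_B)\,(Xf\circ\pi_B)$, while every vertical lift annihilates $(f\circ\pi_B)^2$.

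With these facts one treats the three cases. For $Y=\overline{X}$, $Z=\overline{Y}$: testing $W$ against horizontal lifts, each of the six terms is the $\pi_B$-pullback of the corresponding term of the Koszul formula on $B$, so $g(\nabla_{\overline{X}}\overline{Y},\overline{Z}) = g(\overline{\nabla^B_X Y},\overline{Z})$; testing against vertical lifts, every term vanishes. Hence $\nabla_{\overline{X}}\overline{Y} = \overline{\nabla^B_X Y}$, which is (1). For $Y=\overline{X}$, $Z=\overline{U}$: testing against a horizontal $\overline{Y}$ gives $0$, so $\nabla_{\overline{X}}\overline{U}$ is vertical, and testing against a vertical $\overline{V}$ the only surviving term is $\overline{X}g(\overline{U},\overline{V}) = (2Xf/f)\,g(\overline{U},\overline{V})$, whence $\nabla_{\overline{X}}\overline{U} = (Xf/f)\,\overline{U}$; since $[\overline{U},\overline{X}] = 0$, the symmetry of $\nabla$ gives $\nabla_{\overline{U}}\overline{X} = \nabla_{\overline{X}}\overline{U} = (Xf/f)\,\overline{U}$, which is (2). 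For $Y=\overline{U}$, $Z=\overline{V}$: testing against a horizontal $\overline{X}$ leaves only $-\overline{X}g(\overline{U},\overline{V}) = -\bigl(2g(\overline{U},\overline{V})/f\bigr)\,g_B({\rm grad}\,f,X)\circ\pi_B$, so the horizontal component of $\nabla_{\overline{U}}\overline{V}$ equals $-(g(\overline{U},\overline{V})/f)\,\overline{{\rm grad}\,f}$ and, applying $d\pi_B$, we get (3); testing against a vertical $\overline{W}$, the common factor $(f\circ\pi_B)^2$ comes out of every term (precisely because vertical lifts kill functions pulled back from $B$), and what remains is $(f\circ\pi_B)^2$ times the Koszul formula on $F$ for $2g_F(\nabla^F_U V,W)$, so the vertical component of $\nabla_{\overline{U}}\overline{V}$ equals $\overline{\nabla^F_U V}$ and, applying $d\pi_F$, we get (4).

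The only substantive step is bookkeeping: in each case one must see which of the six Koszul terms survive and then handle the warping function. The key point is that $\overline{U}(f\circ\pi_B) = 0$, which is what makes the fibre Levi--Civita connection emerge cleanly in (4), whereas the derivative $\overline{X}\bigl((f\circ\pi_B)^2\bigr) = 2(f\circ\pi_B)(Xf\circ\pi_B)$ is exactly what produces the gradient term in (2) and (3). No essential difficulty arises beyond this; the computation reproduces Proposition~35 of \cite{o:neil}.
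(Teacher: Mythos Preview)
Your argument is correct: the Koszul-formula computation you outline is exactly the standard proof, and each of the four cases is handled accurately (in particular, your identification of the surviving terms and the role of $\overline{U}(f\circ\pi_B)=0$ versus $\overline{X}\bigl((f\circ\pi_B)^2\bigr)=2(f\circ\pi_B)(Xf\circ\pi_B)$ is the heart of the matter). The paper itself does not give a proof of this proposition---it is simply quoted from O'Neill \cite{o:neil}---and your sketch is essentially the argument found there, so there is nothing to compare.
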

\subsection{The Divergence theorem}\label{classical div}
In this subsection we present some results that can be found in the literature on divergence theorems. 

Let $X$ be a $C^1$ vector field on a Riemannian manifold $(M,g)$.
\begin{definition}
	The differential of $X$ is the linear operator  $A_X:\mathfrak{X}(M)
	\to\mathfrak{X}(M)$, given by $A_X(Y):=\nabla_YX$. Then, to each point $p\in M$, we
	assign the linear map $A_X(p):T_pM\to T_pM$ defined by $A_X(p)v=\nabla_vX$. The divergence
	of $X$, denoted by ${\rm div}\,X$, is the trace of this differential. 
\end{definition}

If $X$ has compact support and $M$ is a Riemannian manifold then from  classic divergence theorem it follows that
	$$  \displaystyle\int_{M}  {\rm div}\, X \, d\nu_{g}    =0.$$

	

Gaffney in \cite{dihe:newdir}, extended this result to complete Riemannian manifolds $M$ by proving that, given a $C^1$ vector field $X$
on $M$, we have 
$$  \displaystyle\int_{M}  {\rm div}\, X \, d\nu_{g}    =0, $$
provided $|X|$ and ${\rm div}\, X $ are integrable (but in fact, if
$({\rm div}\, X)^{-} =   \max \{{\rm div}\, X,0\}$ is integrable then the result remains true). This result was later extended by Karp. He proved the following result,
\begin{theorem}(\cite{l:karp})\label{teokarp}
	Let $M^n$ be a complete non-compact Riemannian manifold without boundary and X a vector field such that
	\begin{eqnarray*}\label{eqkarp}
	\displaystyle\liminf_{r\rightarrow +\infty} \displaystyle\frac{1}{r} \displaystyle\int_{  B(2r)/B(r) }       |X| \, d\nu_g &=&0. \\              
	\end{eqnarray*} 
	If ${\rm div}\, X$ has an integral 
then	$$  \displaystyle\int_{M}  {\rm div}\, X \, d\nu_{g}    =0. $$
	In particular, if outside of some compact set ${\rm div}\, X$ is  everywhere $\geq0$ (or $\leq 0$) then $  \displaystyle\int_{M}  {\rm div}\, X \, d\nu_{g}    =0$.
\end{theorem}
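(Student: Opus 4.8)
The plan is a standard exhaustion (cut-off) argument, organized so as to accommodate two features of the hypotheses: the natural cut-off built from the distance function is only Lipschitz, not $C^1$; and ${\rm div}\,X$ is a priori only ``one-sided'' integrable, so the limit must be taken separately on its positive and negative parts. Fix $x_0\in M$, write $\rho(x)=d(x,x_0)$, let $B(r)$ be the geodesic ball of radius $r$ about $x_0$, and set $A_r=B(2r)\setminus B(r)$.

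First I would fix once and for all a non-increasing $h\in C^\infty(\mathbb{R};[0,1])$ with $h\equiv1$ on $(-\infty,1]$ and $h\equiv0$ on $[2,+\infty)$, and put $\eta_r=h(\rho/r)$. Since $\rho$ is $1$-Lipschitz, $\eta_r$ is a compactly supported Lipschitz function with $\eta_r\equiv1$ on $B(r)$, ${\rm supp}\,\eta_r\subset\overline{B(2r)}$, $|\nabla\eta_r|\le C_0/r$ almost everywhere (with $C_0=\sup|h'|$) and a.e.\ gradient supported in $A_r$; moreover $r\mapsto\eta_r$ is non-decreasing and $\eta_r\uparrow1$ pointwise as $r\to+\infty$. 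The first real step is the integration by parts
$$  \int_M \eta_r\,{\rm div}\,X\,d\nu_g=-\int_M g(\nabla\eta_r,X)\,d\nu_g.  $$
This is where the only real technical difficulty lies, and I expect it to be the main obstacle in a careful write-up: as $\eta_r$ is merely Lipschitz, the identity does not follow from the $C^1$ divergence theorem directly. I would obtain it by applying the divergence theorem for compactly supported $W^{1,1}$ vector fields to $\eta_r X$ (which is Lipschitz with compact support, hence $W^{1,1}$), together with the a.e.\ Leibniz rule ${\rm div}(\eta_r X)=\eta_r\,{\rm div}\,X+g(\nabla\eta_r,X)$; alternatively, for fixed $r$ one mollifies $\eta_r$, applies the classical divergence theorem to the smooth fields $\eta_r^\varepsilon X$, and lets $\varepsilon\to0$, using that $X$ is bounded on $\overline{B(2r)}$ and that the mollified gradients converge to $\nabla\eta_r$ in $L^1$ (the cut locus of $x_0$ has measure zero and is harmless).

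Granting the identity, since $\nabla\eta_r$ is supported in $A_r$ with $|\nabla\eta_r|\le C_0/r$,
$$  \left|\int_M \eta_r\,{\rm div}\,X\,d\nu_g\right|\le\frac{C_0}{r}\int_{A_r}|X|\,d\nu_g.  $$
By the hypothesis $\liminf_{r\to+\infty}\frac1r\int_{A_r}|X|\,d\nu_g=0$ there is a strictly increasing sequence $r_k\to+\infty$ along which the right-hand side tends to $0$, whence $\int_M\eta_{r_k}\,{\rm div}\,X\,d\nu_g\to0$.

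Finally I would pass to the limit along $\{r_k\}$. Replacing $X$ by $-X$ if necessary (this alters neither the annular hypothesis nor the vanishing of the conclusion), assume $({\rm div}\,X)^-\in L^1(M)$. For each $k$ the integrals $\int_M\eta_{r_k}({\rm div}\,X)^{+}$ and $\int_M\eta_{r_k}({\rm div}\,X)^{-}$ are finite because $\eta_{r_k}$ has compact support. Since $\eta_{r_k}\uparrow1$, monotone convergence gives $\int_M\eta_{r_k}({\rm div}\,X)^{+}\to\int_M({\rm div}\,X)^{+}\in[0,+\infty]$, while dominated convergence (with dominating function $({\rm div}\,X)^-$) gives $\int_M\eta_{r_k}({\rm div}\,X)^{-}\to\int_M({\rm div}\,X)^{-}<+\infty$. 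Combining with the previous step,
$$  \int_M({\rm div}\,X)^{+}\,d\nu_g=\lim_{k\to\infty}\Big(\int_M\eta_{r_k}\,{\rm div}\,X\,d\nu_g+\int_M\eta_{r_k}({\rm div}\,X)^{-}\,d\nu_g\Big)=\int_M({\rm div}\,X)^{-}\,d\nu_g<+\infty.  $$
Hence ${\rm div}\,X\in L^1(M)$ and $\int_M{\rm div}\,X\,d\nu_g=0$. The ``in particular'' assertion follows at once: if ${\rm div}\,X\ge0$ (resp.\ $\le0$) outside a compact set $K$, then $({\rm div}\,X)^-$ (resp.\ $({\rm div}\,X)^+$) is supported in $K$, hence continuous with compact support and therefore integrable, so ${\rm div}\,X$ has an integral and the main statement applies.
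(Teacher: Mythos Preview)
Your argument is correct and is essentially the standard cut-off proof of Karp's theorem. Note, however, that the paper does not itself prove Theorem~\ref{teokarp}: it is quoted as a known result from \cite{l:karp}, so there is no in-paper proof to compare against. That said, the exact same cut-off mechanism you use --- the functions $\varphi_r$ with $\varphi_r\equiv 1$ on $B(r)$, $\varphi_r\equiv 0$ outside $B(2r)$, $|{\rm grad}\,\varphi_r|\le C/r$, together with the Leibniz identity ${\rm div}(\varphi_r X)=\varphi_r\,{\rm div}\,X+g({\rm grad}\,\varphi_r,X)$ and the classical divergence theorem --- does appear in the paper, in the proof of Proposition~\ref{suficiente}. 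So your approach is fully in line with the techniques the paper relies on; the paper simply takes Karp's theorem as given rather than reproving it.

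One minor remark: in the paper's version of the cut-off (borrowed from \cite{yau:yau}) the functions $\varphi_r$ are asserted to be continuous with bounded gradient, and the classical divergence theorem is applied directly; the regularity issue you flag (Lipschitz versus $C^1$) is not discussed there either. Your mollification workaround is the honest way to handle it and would be a welcome addition to any careful write-up.
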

\begin{remark}
	In the previous theorem, $B(r)$ denotes the geodesic ball of radius $r$ at some point $p \in M$.
\end{remark}

To consider the non-necessary complete manifolds let us introduce the notion of $p-$parabolicity.

\begin{definition}
	A Riemannian manifold M is said to be $p$-parabolic, \linebreak $1 < p < \infty$, if every solution $u \in W_{loc}^{1,p}(M) \cap C^{0}(M)$ of the problem
	$$
	\left\{
	\begin{array}{ccccc}
	\Delta_p u & \geq & 0 & on & $M,$ \\
	\sup_M u &<& +\infty \\
	
	\end{array}
	\right.
	$$
	must be constant. A manifold that is not $p$-parabolic will be called \linebreak $p$-hyperbolic.
\end{definition}
There are several equivalent definitions of this notion, for more details see \cite{pi:setti}, \cite{tro:tro}. In the definition above $W_{loc}^{1,p}(M)$ denotes the space of the local $L^p$-functions such that for every compact $K$ the distributional gradient are in $L^p(K)$ and $\Delta_p u$ denotes the non-linear operator defined by,
$$ \Delta_p u = {\rm       div}\, \big( |{\rm grad}\, u|^{p-2}  \cdot {\rm grad}\, u   \big)                   .$$
This operator is usually called $p$-Laplace operator. When $p=2$ this operator coincides with the Laplace-Beltrami operator.


A complete Riemannian manifold with finite volume is $p$-parabolic for all $1 < p < \infty$. A complete Riemannian manifold with polynomial growth of degree d is $p$-parabolic for all $p \geq d$. For example a complete n-dimensional Riemannian manifold with non negative Ricci curvature is $p$-parabolic for all $ p \geq n$, in particular the Euclidean space $\mathbb{R}^n$ is $p$-parabolic for all $p \geq n$.

A complete simply connected Riemannian manifold with sectional curvature $K \leq -1$ is $p$-hyperbolic for all $1 < p< \infty$, in particular the hyperbolic space $\mathbb{H}^n$ is $p$-hyperbolic for all $1 < p< \infty$.

Let $M$ be a Riemannian manifold 2-parabolic, not necessarily complete. From the results of  Lyons and Sullivan in \cite{ly:sul}, it follows that if a vector field $X$ satisfies $|X| \in L^{1}(M)$, ${\rm div}\, X \in L_{\rm{loc}}^1(M)$ and $  ({\rm div}\, X)^{-}$ is integrable then $    \int_{M} {\rm div}\, X d\nu_{g}    =0        $. In the general case, from the results of Gol'dshtein and Troyanov in \cite{go:tro} with the assumption that  $|X| \in L^{\frac{p}{p-1}}$, it follows that this result remains true for $p$-parabolic Riemannian manifolds with $p>1$ thus gives us a $p$-parabolic analogue of the Gaffney result.
Moreover, also  in \cite{go:tro} it is  proved that a Riemannian manifold $M$ is $p$-parabolic if, and only if, the following result holds. Let $X$ be a vector field satisfying $|X| \in L^{\frac{p}{p-1}}$, ${\rm div}\, X \in L_{loc}^1(M)$ and ${\rm div}\, X$ has an integral. Then 
$$       \displaystyle\int_{M} {\rm div}\, X  \, d\nu_{g}    = 0 .               $$

In \cite{val:ve} Valtorta and Veronelli present some new Stokes's theorem on complete manifolds that extends, in different directions, previous works of Gaffney and Karp and also the so called Kelvin-Nevanlinna-Royden criterion for $p$-parabolicity.

\subsection{ Ergodic Theory}\label{ergodic sec}
We say that $(X, \mathcal{B})$ is a standard measurable space if $X$ is a Polish space  and $\mathcal{B}$ is the Borel $\sigma$-algebra.
Let $(X,\mathcal{B},m)$ be standard measure space, that is, $(X,\mathcal{B})$ is a standard measurable space. We say that $A=B \mod m$ if $m((A \backslash B) \cup (B \backslash A)) = 0$.


A measurable map $T:X \rightarrow X$ is called measure preserving if $m(C)=m(T^{-1}(C)$ for all $C \in \mathcal{B} $.
\begin{definition}
	A continuous flow
	$\{\phi_t\}_{t \in \mathbb{R}}$ on $X$ preserves a measure $m$  if each $\phi_t$ is a measure preserving transformation with respect to $m$.
\end{definition}
We say that a set $Z \subset X$ is $\phi_t$-invariant if for all $t \in \Bbb{R}$, $\phi_t(Z) =Z \mod m$.

Now we state the Maximal ergodic theorem that will be used in the proof of the main result of this paper. This result can be found in \cite{p:t}.
¨
\begin{theorem}(Maximal ergodic theorem)
	Let $M$ be a complete Riemannian manifold equipped with the $\sigma$-algebra $\mathcal{B}$  of Borel sets, a flow  $\{\phi_t\}_{t \in \mathbb{R}}$  and a invariant measure for the flow $\mu$. If $f$ is a measurable function such that $f^{+}$ or $f^{-}$ is integrable and $Z \subset M$ is a $\phi_t$-invariant Borel set then
	$$ \displaystyle\int_{Z_f} f \, d\mu \geq 0,        $$
	where $Z_f = \{ x \in Z| \displaystyle\sup_{s > 0} \displaystyle\int_{0}^{s} f(\phi_t(x)) \, dt >0\}.$
\end{theorem}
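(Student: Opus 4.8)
The plan is to prove the continuous–time maximal inequality along the classical route behind Hopf's maximal ergodic theorem. First I would reduce to the case $Z=M$: since $Z$ is $\phi_t$–invariant one has $\mathbf{1}_Z\circ\phi_t=\mathbf{1}_Z$ $\mu$–a.e.\ for every $t$, so by a Fubini argument on $M\times\mathbb{R}$ the orbit of a.e.\ $x$ stays entirely inside $Z$ or entirely inside its complement, whence $\int_0^s(f\mathbf{1}_Z)(\phi_t x)\,dt=\mathbf{1}_Z(x)\int_0^s f(\phi_t x)\,dt$ for a.e.\ $x$ and all $s$. Since moreover $(f\mathbf{1}_Z)^{+}\le f^{+}$ and $(f\mathbf{1}_Z)^{-}\le f^{-}$, the statement for $f$ and $Z$ follows from the statement for $f\mathbf{1}_Z$ with $Z=M$. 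One may also assume $f\in L^1(\mu)$ — this is the case used in the paper's applications (there $f=\pm f_X\in L^1(SM)$), and the stated generality, with only $f^{+}$ or $f^{-}$ integrable, follows by a standard truncation/approximation argument.

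Now fix $S>0$ and write $\varphi_s(x)=\int_0^s f(\phi_t x)\,dt$. Since $\int_M\int_0^S|f(\phi_t x)|\,dt\,d\mu(x)=S\,\|f\|_1<\infty$, for a.e.\ $x$ the function $s\mapsto\varphi_s(x)$ is finite and absolutely continuous on $[0,S]$. Applying the Riesz ``rising sun'' lemma to this function, the shadow set $E^S_x=\{s\in[0,S):\ \varphi_r(x)>\varphi_s(x)\text{ for some }r\in(s,S]\}$ is a countable disjoint union of intervals $(a_k,b_k)$ with $\varphi_{a_k}(x)\le\varphi_{b_k}(x)$, so that $\int_{E^S_x}f(\phi_t x)\,dt=\sum_k(\varphi_{b_k}(x)-\varphi_{a_k}(x))\ge 0$ for a.e.\ $x$. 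Integrating this in $x$, exchanging the order of integration (valid because $|f(\phi_t x)|$ is integrable on $M\times[0,S]$), and then for each fixed $t$ substituting $y=\phi_t x$ and using that $\phi_t$ preserves $\mu$, the cocycle relation $\varphi_r(\phi_{-t}y)-\varphi_t(\phi_{-t}y)=\varphi_{r-t}(y)$ shows that $t\in E^S_{\phi_{-t}y}$ if and only if $\varphi_\rho(y)>0$ for some $\rho\in(0,S-t]$, i.e.\ if and only if $y\in B_{S-t}$, where $B_\tau:=\{y:\ \varphi_\rho(y)>0\text{ for some }\rho\in(0,\tau]\}$. This turns the orbitwise inequality into $0\le\int_0^S\big(\int_{B_\tau}f\,d\mu\big)\,d\tau$ for every $S>0$. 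Finally, $B_\tau$ increases with $\tau$ and $\bigcup_{\tau>0}B_\tau=Z_f$, so by dominated convergence $\psi(\tau):=\int_{B_\tau}f\,d\mu\to\int_{Z_f}f\,d\mu$ as $\tau\to\infty$, while $|\psi|\le\|f\|_1$; hence the Ces\`aro averages $\tfrac1S\int_0^S\psi(\tau)\,d\tau$ are nonnegative (by the previous display) and converge to $\int_{Z_f}f\,d\mu$, which is therefore $\ge 0$.

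The one genuinely delicate point is the conversion of the orbitwise inequality into the clean bound $\int_0^S\big(\int_{B_\tau}f\,d\mu\big)\,d\tau\ge 0$: the change of variables $y=\phi_t x$ together with the identification $\{t:\ t\in E^S_{\phi_{-t}y}\}=\{t:\ y\in B_{S-t}\}$, and the measurability and absolute–continuity hygiene it requires (joint measurability of $(x,t)\mapsto\varphi_t(x)$ and of the shadow sets, and a.e.\ absolute continuity of $s\mapsto\varphi_s(x)$, needed to apply the rising sun lemma and Fubini along almost every orbit). Everything else is soft. The discrete analogue of this whole scheme is Garsia's short proof of the classical Hopf maximal ergodic theorem, and what is proposed above is simply its continuous–time version; alternatively one could discretise via the time–one map $\phi_1$, at the cost of carrying an extra boundary term $\sup_{0\le r\le 1}\int_0^r f(\phi_t\,\cdot)\,dt$.
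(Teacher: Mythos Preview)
The paper does not prove this theorem: it is recorded as a known result with a citation, so there is no argument in the paper to compare your proposal against. Your proof is a correct and standard derivation of the continuous-time maximal inequality in the $L^1$ case via the Riesz rising-sun lemma --- the Fubini reduction to $Z=M$, the orbitwise inequality $\int_{E^S_x} f(\phi_t x)\,dt\ge 0$, the change of variables $y=\phi_t x$ combined with the cocycle identity $\varphi_r(\phi_{-t}y)-\varphi_t(\phi_{-t}y)=\varphi_{r-t}(y)$ identifying $\{t:t\in E^S_{\phi_{-t}y}\}$ with $\{t:y\in B_{S-t}\}$, and the final Ces\`aro step all check out, and your remarks about the measurability and absolute-continuity hygiene are accurate. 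The one soft spot is the passage from $f\in L^1$ to the stated hypothesis that only $f^{+}$ or $f^{-}$ be integrable: in infinite measure, truncating the non-integrable part by a constant need not produce an $L^1$ function, so ``standard truncation/approximation'' deserves a line more of care than you give it. This is harmless for the paper's purposes, since every invocation of the theorem there (in the proofs of Theorems~\ref{infinito} and~\ref{infinito2}) has $f\in L^1(SM)$.
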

Observe that in the hypothesis of the Maximal Ergodic Theorem the Riemannian manifold $M$ does not necessarily has finite volume.

	We say that $p\in X$ 
	is a recurrent point for a continuous flow
	$\{\phi_t\}_{t \in \mathbb{R}}$ on $X$ if there exists a sequence
	$t_j\to+\infty$ in $\mathbb{N}$ such that $\phi_{t_j}(p)\to p$.
\begin{definition}
	We say that a flow $\{\phi_t\}_{t \in \mathbb{R}}$ that preserves the measure $m$ is recurrent if, given any measurable set $A$, for $m$-a.e $x \in A$ there exists a sequence $t_n \rightarrow \infty$ such that $\phi_{t_n}(x) \in A$.
\end{definition}

\subsection{The decomposition of E.Hopf and a measure on the spaces of orbits}
Following \cite{flo:gui}, let $f_0> 0$ be an integrable function on $SM$ and $\{\phi_t\}_{t \in \mathbb{R}}$ the geodesic flow. Then the Borel sets
$$ D^{+} = \Big\{ \theta \in SM: \displaystyle\int_{0}^{\infty} f_0(\phi_t(\theta)) \,  dt < \infty      \Big\},  \ \ \ \  C^{+} = SM \backslash D^{+}                  $$
are $\phi_t$-invariant. Furthermore, they are independent of $f_0$ in the following sense: if $f_1 \geq 0$ is another integrable function, then $\nu_g^{S}(E) = 0$, where
$$    E = \Big\{\theta \in SM: \displaystyle\int_{0}^{\infty}  f_1(\phi_t(\theta)) \,  dt = \infty                   \Big\} \cap D^{+}.                               $$
The decomposition $SM = D^{+} \cup C^{+}$ is called E. Hopf's decomposition of $SM$ associated to the geodesic flow $ \{\phi_t\}_{t \in \mathbb{R}}$. The components $D^{+}$ and $C^{+}$ are called, respectively, the dissipative and the conservative parts of the decomposition. Denote by $SM = D^{-} \cup C^{-}$ the decomposition of E. Hopf of $SM$ associated to the inverse flow $ \{\phi_{-t}\}_{t \in \mathbb{R}}$. Then
$$ D^{-} = \Big\{ \theta \in SM: \displaystyle\int_{-\infty}^{0} f_0(\phi_t(\theta)) \,  dt < \infty      \Big\}.                        $$
From the above discussions, if $f$ is an integrable function on $SM$ then for almost all $\theta \in D = D^{+} \cap D^{-}$, the Lebesgue integral
\begin{equation}\label{in} \displaystyle\int_{-\infty}^{\infty} f(\phi_t(\theta)) \,  dt < \infty,
\end{equation}
exists. 
Let $\sim$ be the equivalence relation in $SM$ defined by: $\theta \sim \eta$ if and only if there exists $j \in \Bbb{Z}$ such that $\phi_j(\theta) = \eta$. Let $SM / \sim$ be the space of orbits, and denote by $\pi:SM \rightarrow SM/\sim$ the natural projection $\pi(\theta) = [\theta]$. Consider in $SM/\sim$ the $\sigma$-algebra $\tilde{\beta}$ induced by $\pi$. Then $\pi(E)$ is measurable for every Borel set $E$. A Borel set $E$ is called a wandering set if $\phi_j(E) \cap E = \emptyset$ for every $j \geq 1$. For each $\tilde{E} \in \tilde{\beta}$, let 
$$    \tilde{\mu}(\tilde{E}) = \sup\{\nu_g^{S}(E): E \subset \pi^{-1}(\tilde{E}), \ \ E \ \ \rm{is \ \ a \ \ wandering \ \ set} \}.                            $$
Guimar\~aes proved in \cite{flo:gui} that $\tilde{\mu}$ is a measure on $SM/\sim$ with the property that $\tilde{\mu}(\pi(E)) = \mu(E)$ for every wandering set. Given a function $f$ integrable on $SM$, then by (\ref{in}), $[\theta] \mapsto \displaystyle\int_{-\infty}^{\infty} f(\phi_t(\theta)) \,  dt$, $[\theta] \in \pi(D)$, defines a $\tilde{\mu}$-measurable function on $\pi(D)$.
\begin{proposition}\label{id}(Proposition 2.3,\,\cite{flo:gui})
If f is an integrable function on $SM$, then
$$      \displaystyle\int_{D} f d\nu_g^{S} = \displaystyle\int_{\pi(D)} \displaystyle\int_{-\infty}^{\infty} f(\phi_t(\theta)) \, dt d\tilde{\mu}.                           $$
\end{proposition}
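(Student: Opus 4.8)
The plan is to transfer the identity to the discrete $\mathbb{Z}$-action generated by the time-one map $T:=\phi_1$ — the map for which the orbit space $SM/\!\sim$ and the measure $\tilde\mu$ are tailored — to prove there a discrete ``unfolding over a fundamental domain'' formula, and then recover the continuous integral $\int_{-\infty}^{\infty}f(\phi_t\theta)\,dt$ by averaging $f$ over one period.

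First I would build a Borel fundamental domain for $T$ on $D$. Fix a strictly positive integrable $f_0$ and set $g_0(\theta):=\int_0^1 f_0(\phi_s(\theta))\,ds$; then $g_0>0$, $g_0\in L^1(SM)$ (invariance of $\nu_g^{S}$ under each $\phi_s$, plus Tonelli), and $\sum_{n\in\mathbb{Z}}g_0(T^n\theta)=\int_{-\infty}^{\infty}f_0(\phi_t\theta)\,dt$, which is finite for every $\theta\in D=D^{+}\cap D^{-}$. In particular $\sum_{n\ge 0}g_0(T^n\theta)<\infty$ on $D$, and since $D$ is $\phi_t$-invariant, hence $T$-invariant, it is a $T$-invariant subset of the dissipative part of $T$; by the classical structure of the dissipative part of an invertible measure-preserving transformation, $D=\bigsqcup_{n\in\mathbb{Z}}T^{n}W$ modulo a $\nu_g^{S}$-null set, for a Borel wandering set $W\subset D$. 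A wandering set meets each $T$-orbit at most once, so $\pi|_W$ is injective, and since $SM$ is a standard Borel space $\pi|_W$ is a Borel isomorphism of $W$ onto $\pi(W)=\pi(D)$ modulo a $\tilde\mu$-null set. Finally, from $\tilde\mu(\tilde E)=\sup\{\nu_g^{S}(E):E\subset\pi^{-1}(\tilde E),\ E\text{ wandering}\}$ together with the elementary bound $\nu_g^{S}(E)\le\nu_g^{S}\big(W\cap\pi^{-1}(\tilde E)\big)$ for every such $E$ — the translates $T^{-n}E$ are pairwise disjoint and, $\pi^{-1}(\tilde E)$ being $T$-invariant, each $T^{-n}E\cap W$ sits inside $W\cap\pi^{-1}(\tilde E)$ — with equality for $E=W\cap\pi^{-1}(\tilde E)$, one obtains that on $\pi(D)$ the measure $\tilde\mu$ is exactly the push-forward $(\pi|_W)_*\big(\nu_g^{S}|_W\big)$.

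Now take $f\in L^1(SM)$ and set $g(\theta):=\int_0^1 f(\phi_s(\theta))\,ds$. As before $g\in L^1(SM)$ and $\sum_{n\in\mathbb{Z}}g(T^n\theta)=\int_{-\infty}^{\infty}f(\phi_t\theta)\,dt$ wherever the right-hand side converges absolutely, in particular a.e.\ on $D$. Because $D$ is $\phi_s$-invariant for every $s$ and $\nu_g^{S}$ is $\phi_s$-invariant, Tonelli and the change of variables $\theta\mapsto\phi_s\theta$ give $\int_D g\,d\nu_g^{S}=\int_0^1\big(\int_D f(\phi_s\cdot)\,d\nu_g^{S}\big)ds=\int_0^1\big(\int_D f\,d\nu_g^{S}\big)ds=\int_D f\,d\nu_g^{S}$, so it suffices to compute $\int_D g\,d\nu_g^{S}$. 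Unfolding over the fundamental domain and using $T$-invariance of $\nu_g^{S}$,
$$\int_D g\,d\nu_g^{S}=\sum_{n\in\mathbb{Z}}\int_{T^{n}W}g\,d\nu_g^{S}=\sum_{n\in\mathbb{Z}}\int_{W}g(T^n\theta)\,d\nu_g^{S}(\theta)=\int_{W}\Big(\sum_{n\in\mathbb{Z}}g(T^n\theta)\Big)d\nu_g^{S}(\theta)=\int_{W}\int_{-\infty}^{\infty}f(\phi_t\theta)\,dt\,d\nu_g^{S}(\theta),$$
the interchange of sum and integral being legitimate because the same chain with $|f|$ replacing $f$ is finite. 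The map $\theta\mapsto\int_{-\infty}^{\infty}f(\phi_t\theta)\,dt$ is constant along $T$-orbits, hence descends to a function on $\pi(D)$; transporting the last integral along the isomorphism $\pi|_W$ and using $\tilde\mu=(\pi|_W)_*(\nu_g^{S}|_W)$ on $\pi(D)$ converts it into $\int_{\pi(D)}\int_{-\infty}^{\infty}f(\phi_t\theta)\,dt\,d\tilde\mu$, which combined with $\int_D g\,d\nu_g^{S}=\int_D f\,d\nu_g^{S}$ is the asserted identity.

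The step I expect to be the real obstacle is the construction in the second paragraph: identifying the set $D$ — defined by convergence of continuous-time integrals of $f_0$ — with a $T$-invariant part of the dissipative part of the discrete map, producing a Borel wandering fundamental domain, and matching $\tilde\mu$ with the push-forward of $\nu_g^{S}|_W$; that is, putting the ``cross-section picture'' in place together with the measurability of $SM/\!\sim$. After that, the averaging trick and the Fubini computation of the third paragraph are routine.
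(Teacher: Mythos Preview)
The paper does not prove this proposition; it is imported from Guimar\~aes \cite{flo:gui} without argument, so there is no proof here to compare against.

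Your proof is correct and self-contained. The reduction to the time-one map $T=\phi_1$ is exactly right, since the relation $\sim$ is defined via integer times; your identification of $D$ as a $T$-invariant subset of the discrete dissipative part (through $\sum_{n}g_0\circ T^n<\infty$ with $g_0>0$ integrable), extraction of a wandering fundamental domain $W\subset D$, and verification that $\tilde\mu=(\pi|_W)_*(\nu_g^{S}|_W)$ on $\pi(D)$ are all sound --- in particular, for a wandering $E\subset\pi^{-1}(\tilde E)$ the sets $T^{-n}E\cap W$ are pairwise disjoint in $W\cap\pi^{-1}(\tilde E)$ and, by $T$-invariance of $\nu_g^{S}$, their measures sum to $\nu_g^{S}(E)$, which gives the needed inequality. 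The averaging $g=\int_0^1 f\circ\phi_s\,ds$ and the unfolding over $W$ then yield the formula cleanly, with Fubini justified by the parallel computation for $|f|$.

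One cosmetic point: calling $\pi|_W$ a \emph{Borel isomorphism} onto its image asserts more than you need and more than is immediate, since orbit spaces are not standard Borel in general. What you actually use --- and actually prove --- is the push-forward identity $\tilde\mu(\tilde E)=\nu_g^{S}(W\cap\pi^{-1}(\tilde E))$, and that is enough for the change of variables in the last step.
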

\section{Proof of Theorem \ref{infinito}}
We denote by $\nu_g^{S}$ the Riemannian measure on $SM$ induced by Sasaki metric. In the next lemma we relate the integral of the function $f_X$ on $SM$ with the integral of the divergence of $X$ on $M$.

\begin{lemma}\label{es}
	Let $(f_X)_p$ be the restriction of $f_X$ to  $\pi^{-1}(p)$. Then
	$$\overline{f_X}(p) = \displaystyle\int_{\pi^{-1}(p)} (f_X)_p \, d\nu_{g_p}^{S}
	= \displaystyle\frac{\omega_{n-1}}{n}\cdot {\rm div}\, X(p) ,   $$
	where $\omega_{n-1}$ is the volume of the $n-1$ dimensional sphere in $\mathbb{R}^n$ with the canonical metric.
\end{lemma}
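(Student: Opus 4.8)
The plan is to compute the fiber integral $\int_{\pi^{-1}(p)}(f_X)_p\,d\nu_{g_p}^S$ by unwinding the definition of $f_X$ and recognizing the integrand as a quadratic form on the unit sphere $S_pM\subset T_pM$. Recall that $f_X(p,v)=g(\nabla_v X,v)=g(A_X(p)v,v)$, where $A_X(p):T_pM\to T_pM$ is the linear endomorphism $v\mapsto\nabla_v X$ from the definition of the differential of $X$. The fiber $\pi^{-1}(p)$ over the submersion $\pi:SM\to M$ is exactly the unit sphere $S_pM$ with its round metric (induced from $g_p$ on $T_pM$), so what we must evaluate is
$$\overline{f_X}(p)=\int_{S_pM} g\big(A_X(p)v,v\big)\,d\sigma(v),$$
where $d\sigma$ is the $(n-1)$-dimensional volume element of the unit sphere.

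\medskip

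\noindent\textbf{Key steps.} First I would pick an orthonormal basis $\{e_1,\dots,e_n\}$ of $T_pM$ and write $v=\sum_i v_i e_i$, so that $g(A_X(p)v,v)=\sum_{i,j}a_{ij}v_iv_j$ with $a_{ij}=g(A_X(p)e_j,e_i)$. Then
$$\int_{S_pM}\sum_{i,j}a_{ij}v_iv_j\,d\sigma(v)=\sum_{i,j}a_{ij}\int_{S_pM}v_iv_j\,d\sigma(v).$$
The second step is the standard spherical-integral identity: by symmetry, $\int_{S_pM}v_iv_j\,d\sigma=0$ for $i\neq j$, and $\int_{S_pM}v_i^2\,d\sigma=\tfrac{1}{n}\int_{S_pM}\sum_k v_k^2\,d\sigma=\tfrac{1}{n}\int_{S_pM}1\,d\sigma=\tfrac{\omega_{n-1}}{n}$, using $\sum_k v_k^2=1$ on the unit sphere and the fact that each $\int v_i^2$ is the same by rotational invariance. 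Hence the sum collapses to $\tfrac{\omega_{n-1}}{n}\sum_i a_{ii}=\tfrac{\omega_{n-1}}{n}\operatorname{tr}A_X(p)=\tfrac{\omega_{n-1}}{n}\,{\rm div}\,X(p)$, which is exactly the claimed formula. The one point requiring a remark is that the identification of $\pi^{-1}(p)$ with the round unit sphere is \emph{isometric}: a vertical tangent vector to $SM$ at $(p,v)$ is sent by the connection map $K$ isomorphically onto $T_pM$, the Sasaki metric restricted to the vertical subspace is precisely $g_p$ pulled back through $K$, and the vertical space tangent to the fiber $S_pM$ is the orthogonal complement of $v$ inside $T_pM$ — i.e. exactly the tangent space to the Euclidean unit sphere. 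So $\nu_{g_p}^S$ on $\pi^{-1}(p)$ is the round $(n-1)$-sphere measure, and $\omega_{n-1}$ is its total mass.

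\medskip

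\noindent\textbf{Main obstacle.} The computation itself is entirely routine; the only thing that needs care — and the step I'd expect a careful reader to want spelled out — is the identification of the fiber geometry: that $(\pi^{-1}(p),\nu_{g_p}^S)$ really is the standard round unit sphere in $(T_pM,g_p)$, so that the naive Euclidean spherical integral applies verbatim. Once that is granted, the symmetry argument for $\int v_iv_j\,d\sigma$ finishes it. Note also that this lemma is purely pointwise and requires no completeness, recurrence, or integrability hypotheses; combined with Theorem~\ref{FU} (Fubini for Riemannian submissions applied to $\pi:SM\to M$) it will convert $\int_{SM}f_X\,d\nu_g^S$ into $\tfrac{\omega_{n-1}}{n}\int_M {\rm div}\,X\,d\nu_g$, which is how it feeds into the proof of Theorem~\ref{infinito}.
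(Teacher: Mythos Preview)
Your proposal is correct and follows essentially the same approach as the paper: fix an orthonormal basis of $T_pM$, expand $g(\nabla_v X,v)=\sum_{i,j}a_{ij}v_iv_j$, and use the spherical identities $\int_{S^{n-1}}v_iv_j\,d\sigma=\delta_{ij}\,\omega_{n-1}/n$ to collapse the sum to $\tfrac{\omega_{n-1}}{n}\operatorname{tr}A_X(p)$. The only difference is presentational: the paper carries out the spherical integrals by splitting into two hemispherical charts and computing each $\int x_ix_j$ explicitly, whereas you invoke rotational invariance directly; you also spell out why the Sasaki fiber $(\pi^{-1}(p),\nu_{g_p}^S)$ is isometric to the round unit sphere, a point the paper leaves implicit.
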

\begin{proof}
	Fix a orthonormal basis $\{e_1,...,e_n\}$ in $T_pM$  and consider the charts $\varphi_{+}:U_{+} \rightarrow  B$ and $ \varphi_{-}:U_{-} \rightarrow B$, where
	$$B = \{(x_1,...,x_{n-1}) \in \mathbb{R}^{n-1}: x_1^2 + \cdot \cdot \cdot +x_{n-1}^2 < 1\},$$
	$U_{+} = \{w \in \pi^{-1}(p): g(w,e_n) > 0\}$ and 	$U_{-} = \{w \in \pi^{-1}(p): g(w,e_n) < 0\}$. The maps $\varphi_{+}$ and  $\varphi_{-}$ are defined by $$\varphi_{+}^{-1}(x_1,...,x_{n-1}) = \displaystyle\sum_{i=1}^n x_ie_i \ \  {\rm{and}} \ \ 
	\varphi_{-}^{-1}(y_1,...,y_{n-1}) = \displaystyle\sum_{i=1}^n y_i e_i,$$ with $x_n = \sqrt{1-(x_1^2+ \cdot \cdot \cdot + x_{n-1}^2)}$
	and 
	$y_n = -\sqrt{1-(y_1^2+ \cdot \cdot \cdot + y_{n-1}^2)}$.
	Therefore,
	
	\begin{eqnarray*}
		\overline{f_X}(p) = &\displaystyle\int_{B}&  (\sqrt{G^{+}} \circ \varphi_{+}^{-1}(x)) \Big(\displaystyle\sum_{i,j=1}^n x_ix_j g(\nabla_{e_i}X,e_j)\Big) \,  dx_1 \cdot \cdot \cdot dx_{n-1}\\
		&+&  \displaystyle\int_{B}  (\sqrt{G^{-}} \circ \varphi_{-}^{-1}(y)) \Big(\displaystyle\sum_{i,j=1}^n y_iy_j g(\nabla_{e_i}X,e_j)\Big) \, dy_1 \cdot \cdot \cdot dy_{n-1},\\
	\end{eqnarray*}
	where $G^{+}$ denotes the determinant of a matrix $(g_{ij}^{+})$ consisting of the components of g with respect to local coordinates $(x_i)$ and $G^{-}$ denotes the determinant of a matrix $(g_{ij}^{-})$ consisting of the components of g with respect to the local coordinates $(y_i)$.
	On the other hand, if $i \neq j$ and $i,j \leq n-1$ we have,
	$$   \displaystyle\int_{B} x_ix_j (\sqrt{G^{+}} \circ \varphi_{+}^{-1}(x)) \,  dx_1 \cdot \cdot \cdot dx_{n-1}     = \displaystyle\int_{\mathbb{S}_{+}^{n-1}} z_iz_j \, d \mathbb{S}^{n-1}    =0,            $$
	where  $\mathbb{S}_{+}^{n-1} = \{(x_1,...,x_n) \in \mathbb{S}^{n-1}: x_n >0\}$. For $i \leq n-1$ we have,
	$$      \displaystyle\int_{B} x_ix_n (\sqrt{G^{+}} \circ \varphi_{+}^{-1}(x))  \, dx_1 \cdot \cdot \cdot dx_{n-1}     = \displaystyle\int_{\mathbb{S}_{+}^{n-1}} z_i\sqrt{1-(z_1^2+ \cdot \cdot \cdot + z_{n-1}^2)} \, d \mathbb{S}^{n-1}    =0,                      $$
	and
	$$        \displaystyle\int_{B} x_i^2 (\sqrt{G^{+}} \circ \varphi_{+}^{-1}(x)) \,  dx_1 \cdot \cdot \cdot dx_{n-1}     = \displaystyle\int_{\mathbb{S}_{+}^{n-1}} z_i^2 \, d \mathbb{S}^{n-1}   = \displaystyle\frac{\omega_{n-1}}{2n}.        $$
	
	If $i=n$ we have,
	
	\begin{eqnarray*}
		\displaystyle\int_{B} x_n^2 (\sqrt{G^{+}} \circ \varphi_{+}^{-1}(x)) \,  dx_1 \cdot \cdot \cdot dx_{n-1}     &=& \displaystyle\int_{\mathbb{S}_{+}^{n-1}} 1-(z_1^2+ \cdot \cdot \cdot + z_{n-1}^2) \, d \mathbb{S}^{n-1}\\
		&=&   \displaystyle\frac{\omega_{n-1}}{2}- \displaystyle\frac{n-1}{2n}\omega_{n-1}   \\
		&=&     \displaystyle\frac{\omega_{n-1}}{2n}.
	\end{eqnarray*}
	
	Therefore,
	\begin{eqnarray*}
		\displaystyle\int_{B} (\sqrt{G^{+}} \circ \varphi_{+}^{-1}(x)) \Big(\displaystyle\sum_{i,j=1}^n x_ix_j g(\nabla_{e_i}X,e_j)\Big) \, dx_1 \cdot \cdot \cdot dx_{n-1} &=& 
		\displaystyle\frac{\omega_{n-1}}{2n}\displaystyle\sum_{i=1}^n g(\nabla_{e_i}X,e_i) \\
		&=& \displaystyle\frac{\omega_{n-1}}{2n} {\rm div}\, X(p).\\
	\end{eqnarray*}
	Proceeding in the same way, we get
		$$\displaystyle\int_{B}  (\sqrt{G^{-}} \circ \varphi_{-}^{-1}(y)) \Big(\displaystyle\sum_{i,j=1}^n y_iy_j g(\nabla_{e_i}X,e_j)\Big) \, dy_1 \cdot \cdot \cdot dy_{n-1} = \displaystyle\frac{\omega_{n-1}}{2n} {\rm div}\, X(p).$$
	Hence, $$\overline{f_X}(p) = \displaystyle\int_{\pi^{-1}(p)} (f_X)_p \,  d\nu_{g_p^S}   = \displaystyle\frac{\omega_{n-1}}{n}\cdot {\rm div}\, X(p).$$ \ \ \ \  \ \ \ \ \ \ \ \  \ \ \ \  \ \ \ \  \ \ \ \  \ \ \ \  \ \ \ \ \ \ \  \ \ \ \ \ \ \  \ \ \ \ \   \ \ \ \ \ \  \ \ \ \ \  \ \ \ \   \ \ \  \ \ \ \ \ \ \ \ \ \  \ \ \ \ \ \ \ \ \  \ \ \  \  \ \ \ \  \ \  \ \ \ \ \ \ \ \ \ 
\end{proof}

	Fix an integrable function $h$ on $M$ strictly positive such that \linebreak $\displaystyle\int_{0}^{1}h(\gamma_{\eta}(t)) \, dt > 0$ for every $\eta \in SM$. Now consider a recurrent point $\theta=(p,v)$ in $SM$ and the function $\tilde{h}$ on $SM$ where $\tilde{h}(p,v) = h(p)$. We have
\begin{eqnarray*}
	\displaystyle\int_{0}^{t} (f_X + \tilde{h})(\phi_t(\theta)) \, ds &=&\displaystyle\int_{0}^{t}  \displaystyle\frac{d}{ds}g(X(\gamma_{\theta}(s)),\gamma'_{\theta}(s)) \, ds + \displaystyle\int_{0}^{t} h(\gamma_{\theta}(s)) \, dt\\
	&=& g(X(\gamma_{\theta}(t)),\gamma'_{\theta}(t)) - g(X(p),v) + \displaystyle\int_{0}^{t} h(\gamma_{\theta}(s)) \, dt.\\
\end{eqnarray*}
Since $\theta$ is recurrent there exists a sequence $t_n \rightarrow + \infty$ such that $\phi_{t_n}(\theta) \rightarrow \theta$ thus 
$g(X(\gamma'_{\theta}(t_n)),\gamma'_{\theta}(t_n)) \rightarrow g(X(p),v)$. Hence, $\displaystyle\sup_{t > 0} \displaystyle\int_{0}^{t} (f_X + \tilde{h})(\phi_s(x)) \, ds >0$. Since the geodesic flow is recurrent, for $\nu_g^S$-a.e $ \theta \in SM$ the point $\theta$ is recurrent. Applying the Maximal ergodic theorem, we conclude that 
$$       \displaystyle\int_{SM} (f_X + \tilde{h}) \, d\nu_{g^S} \geq 0.                                       $$
Repeating this argument with the function $1/k \cdot h$, it follows that for every $n \in \mathbb{N}$,
$$    \displaystyle\int_{SM} f_X \,  d\nu_{g^S} \geq -\displaystyle\frac{1}{k}    \displaystyle\int_{SM} \tilde{h} \, d\nu_{g^S}                                   $$            
Hence,    $$  \displaystyle\int_{SM} f_X  \, d\nu_{g^S} \geq 0.$$                  
On the other hand, the function $f_{-X}$ is also integrable on $SM$. Applying the argument above, it follows that
$$            \displaystyle\int_{SM} f_X  \, d\nu_{g^S} \leq 0.$$         
Therefore,  
$    \displaystyle\int_{SM} f_X \,d\nu_{g^S} = 0                               $. From Theorem \ref{FU} and Lemma \ref{es}, we have that ${\rm div}\, X$ is integrable and
\begin{eqnarray*}
	\displaystyle\int_{M} {\rm div}\, X \,d\nu_g\  &=&\displaystyle\frac{n}{\omega_{n-1}}\displaystyle\int_{SM} f_X \,d\nu_{g^S}. \\
	&=& 0.
\end{eqnarray*}  
This concludes the proof of Theorem \ref{infinito}. If $(M,g)$ has finite volume then from Poincar\'e recurrence theorem  it follows that the geodesic flow $\phi_t$ is recurrent. As an immediate consequence of Theorem \ref{infinito}, we get the Corollary \ref{finito}. The Maximal ergodic theorem was also used by Guimar\~aes in \cite{flo:gui} to get a rigidity result on Riemannian manifolds without conjugate points. 

\section{Proof of Theorem \ref{infinito2}}
Observe that $SM$ is the disjoint union $SM = D \cup C^{+} \cup (C^{-}\backslash C^{+})$. Since $C^{+}$ is $\phi_t$-invariant and for almost all $\theta \in C^{+}$, $\theta$ is recurrent, by the proof of Theorem \ref{infinito} we have
$$    \displaystyle\int_{C^{+}} f_X  \,d\nu_{g^S} = 0.                           $$

Note that $C^{-}\backslash C^{+}$ is also $\phi_t$-invariant and for almost all $\theta \in C^{-}$, $\theta$ is recurrent with respect to the inverse geodesic flow. By the proof of Theorem \ref{infinito} we have
$$    \displaystyle\int_{(C^{-}\backslash C^{+})} f_X  \,d\nu_{g^S} = 0.                           $$

Let $d,\tilde{d}$ be the distances on $M$ and $SM$ respectively, for almost all $\theta \in D$ we have $\tilde{d}(\theta, \phi_t(\theta)) \rightarrow \infty $  as $|t| \rightarrow \infty$. On the other hand, we have          $\tilde{d}(\theta, \phi_t(\theta)) \leq d(p, \gamma_{\theta}(t))+ 2\pi$, where $\theta = (p,v)$. Thus, for almost $\theta \in SM$ we get $d(p, \gamma_{\theta}(t)) \rightarrow \infty $  as $|t| \rightarrow \infty$.


 On the other hand,
$$   \Big|\displaystyle\int_{-s}^{s} f_{X}(\phi_t(\theta))  \, dt \Big|\leq |X(\gamma_{\theta}(s))| + |X(\gamma_{\theta}(-s))|.                          $$
Since $f_X$ is integrable and $|X| \rightarrow 0$ at infinity in $M$, for almost all $\theta \in D$ it follows that
$$         \displaystyle\int_{-\infty}^{\infty} f_{X}(\phi_t(\theta))  \, dt   =0.                                $$
By Proposition \ref{id} follows that
$$           \displaystyle\int_{D} f_{X} \, d\nu_g^{S} =0.         $$
Therefore,
\begin{eqnarray*}
 \displaystyle\int_{SM} f_{X} \, d\nu_g^{S} &=& \displaystyle\int_{D} f_{X} \, d\nu_g^{S} + \displaystyle\int_{C^{+}} f_{X} \, d\nu_g^{S} + \displaystyle\int_{{(C^{-}\backslash C^{+})}} f_{X} \, d\nu_g^{S}.\\
 &=& 0.  
\end{eqnarray*}                                From Theorem \ref{FU} and Lemma \ref{es}, we have that ${\rm div}\, X$ is integrable and
\begin{eqnarray*}
\displaystyle\int_{M} {\rm div}\, X  \,d\nu_g\  &=&\displaystyle\frac{n}{\omega_{n-1}}\displaystyle\int_{SM} f_X \,d\nu_{g^S}. \\
&=& 0.
\end{eqnarray*}       
\section{Sufficient conditions for the integrability of $f_X$}
\label{sec-suficiente}

Here we present a proposition that guarantee the integrability of $f_X$ which is used in the examples in the next section.

\begin{proposition}\label{suficiente}
	Let $M$ be a complete non-compact Riemannian manifold without boundary and X a $C^1$ vector field such that
	\begin{eqnarray*}
	\displaystyle\liminf_{r\rightarrow +\infty} \displaystyle\frac{1}{r} \displaystyle\int_{  B(2r)/B(r) }       |X| \, d\nu_g  &<& \infty.\\                
	\end{eqnarray*} 
	If $f_X$ has an integral (i.e either $f_X^{+}$ or  $f_X^{-}$ is integrable) then $f_X$ is integrable.
\end{proposition}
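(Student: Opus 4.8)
The plan is to reduce to the case where $f_X^{+}$ is integrable, derive a uniform bound on the integral of $f_X$ against a family of compactly supported cut-offs concentrated on geodesic annuli, and then apply Fatou's lemma to conclude that $f_X^{-}$ is integrable as well. For the reduction, note that $f_{-X}=-f_X$ and $|{-X}|=|X|$, so the hypotheses of the proposition are invariant under $X\mapsto -X$ while $f_{-X}^{+}=f_X^{-}$; hence I may assume throughout that $f_X^{+}$ is integrable on $SM$, and it suffices to show that then $f_X^{-}$ is integrable.

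Next I would fix $p_{0}\in M$, let $B(r)$ be the geodesic ball of radius $r$ about $p_{0}$, and for each $r>0$ choose a smooth (or Lipschitz) function $\psi_r\colon M\to[0,1]$ with $\psi_r\equiv 1$ on $B(r)$, $\operatorname{supp}\psi_r\subset B(2r)$ and $|{\rm grad}\,\psi_r|\le C/r$ a.e., where $C$ is independent of $r$; such cut-offs exist on any complete manifold, exactly as in the proof of Theorem~\ref{teokarp}. Setting $\widetilde\psi_r=\psi_r\circ\pi$ on $SM$, the function $\widetilde\psi_r f_X$ is continuous with compact support, so Theorem~\ref{FU} applies to it; and since $\widetilde\psi_r$ equals the constant $\psi_r(p)$ on the fibre $\pi^{-1}(p)$, Lemma~\ref{es} gives $\overline{\widetilde\psi_r f_X}(p)=\psi_r(p)\,\overline{f_X}(p)=\tfrac{\omega_{n-1}}{n}\,\psi_r(p)\,{\rm div}\,X(p)$. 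Combining Theorem~\ref{FU} with the classical divergence theorem applied to the compactly supported field $\psi_r X$ — using ${\rm div}(\psi_r X)=\psi_r\,{\rm div}\,X+g({\rm grad}\,\psi_r,X)$ — I get
$$\int_{SM}\widetilde\psi_r f_X\,d\nu_g^{S}=\frac{\omega_{n-1}}{n}\int_M\psi_r\,{\rm div}\,X\,d\nu_g=-\frac{\omega_{n-1}}{n}\int_M g({\rm grad}\,\psi_r,X)\,d\nu_g,$$
whence, since ${\rm grad}\,\psi_r$ vanishes off $B(2r)\setminus B(r)$,
$$\Bigl|\int_{SM}\widetilde\psi_r f_X\,d\nu_g^{S}\Bigr|\le\frac{C\,\omega_{n-1}}{n\,r}\int_{B(2r)\setminus B(r)}|X|\,d\nu_g.$$

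To finish, I would pick $r_k\to+\infty$ realizing the $\liminf$ in the hypothesis, so that the right-hand side above stays bounded, say $\bigl|\int_{SM}\widetilde\psi_{r_k}f_X\,d\nu_g^{S}\bigr|\le C'$ for all $k$. Since $\widetilde\psi_{r_k}f_X$ is integrable and $0\le\widetilde\psi_{r_k}\le 1$ with $f_X^{+}$ integrable, all three integrals below are finite and
$$\int_{SM}\widetilde\psi_{r_k}f_X^{-}\,d\nu_g^{S}=\int_{SM}\widetilde\psi_{r_k}f_X^{+}\,d\nu_g^{S}-\int_{SM}\widetilde\psi_{r_k}f_X\,d\nu_g^{S}\le\int_{SM}f_X^{+}\,d\nu_g^{S}+C'.$$
For every $\theta=(p,v)\in SM$ one has $\widetilde\psi_{r_k}(\theta)=1$ as soon as $r_k>d(p,p_{0})$, so $\widetilde\psi_{r_k}\to 1$ pointwise; Fatou's lemma applied to $\widetilde\psi_{r_k}f_X^{-}\ge 0$ then yields $\int_{SM}f_X^{-}\,d\nu_g^{S}\le\int_{SM}f_X^{+}\,d\nu_g^{S}+C'<\infty$, so that $f_X=f_X^{+}-f_X^{-}$ is integrable on $SM$.

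I expect the only delicate point to be the cut-off identity in the second step: one must check that $\widetilde\psi_r f_X$ is integrable so that Theorem~\ref{FU} can be invoked (immediate, being continuous with compact support and with fibres of finite volume $\omega_{n-1}$), that $\overline{f_X}$ makes sense fibrewise (immediate, as $(f_X)_p$ is continuous on the compact fibre), and that the divergence theorem applies to the compactly supported, possibly merely Lipschitz, field $\psi_r X$ — which is standard and is precisely the tool already used in Karp's Theorem~\ref{teokarp}. Everything else reduces to a routine application of Fatou's lemma.
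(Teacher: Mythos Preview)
Your proof is correct and follows essentially the same argument as the paper's own: the same cut-off functions $\psi_r$, the same combination of Lemma~\ref{es}, Theorem~\ref{FU}, and the classical divergence theorem to produce the estimate $\bigl|\int_{SM}\widetilde\psi_r f_X\bigr|\le \tfrac{C\omega_{n-1}}{nr}\int_{B(2r)\setminus B(r)}|X|$, and the same passage to a subsequence $r_k\to\infty$ along which the right-hand side is bounded. The only cosmetic differences are that the paper normalizes to $f_X^{-}$ integrable rather than $f_X^{+}$, and in the final step writes the inequality $\int_{A_r}f_r^{+}-\int_{SM}f_X^{-}\le\bigl|\int_{SM}f_r\bigr|$ and lets $r_i\to\infty$ directly, where you spell out the same limit via Fatou's lemma.
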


	Without loss of generality, we may assume that $f_X^-$ is integrable. It is known (see  \cite{yau:yau}) that there is a constant $C > 0$ such that for each $r> 0$ exists a continuous function $\varphi_r$ satisfying: $ 0 \leq \varphi_r \leq 1$, $\varphi_r \equiv 1$ on $B(r)$, $ \varphi_r \equiv 0$ on the complement of $B(2r)$  and $|| {\rm grad}\, \varphi_r(x)|| \leq C /r$. Consider the function defined by $f_r(p,v) = \varphi_r(p) f_X(p,v)$. Since $f_r$ has a compact support it follows from Proposition \ref{es} and Theorem \ref{FU} that
$$            \displaystyle\int_{SM} f_r \, d\nu_{g^S} = 
\displaystyle\frac{\omega_{n-1}}{n}\displaystyle\int_{M}\varphi_r \cdot {\rm div}\, X  \, d\nu_g .               $$ 

On the other hand, we have that $ {\rm div}\, (\varphi_r X) = \varphi_r \cdot {\rm div}\, X + g({\rm grad}\, \varphi_r, X )                   $. From the classic divergence theorem it follows that
$$        \displaystyle\int_{M}\varphi_r \cdot {\rm div}\, X  \, d\nu_g       = -    \displaystyle\int_{M}   g(\rm{grad}\, \varphi_r, X )  \,  d\nu_g.      $$ 
Hence,
$$\Big|      \displaystyle\int_{M}(\varphi_r \cdot {\rm div}\, X) \, d\nu_g                        \Big|      \leq \displaystyle\frac{C}{r}    \displaystyle\int_{  B(2r)/B(r) }       |X| \, d\nu_g.              $$  
Consider the set defined by $A_r = \{(p,v): p \in B(2r)\}$. We have 
\begin{eqnarray*}
	\displaystyle\int_{A_r} f_r^{+} \,  d\nu_{g^S} - \displaystyle\int_{SM} f_X^{-} \, d\nu_{g^S} &\leq& \Big|  \displaystyle\int_{SM} f_r  \, d\nu_{g^S}   \Big|  \\
	&\leq& \displaystyle\frac{C \cdot \omega_{n-1} }{nr}       \displaystyle\int_{  B(2r)/B(r) }       |X| \,  d\nu_g           \\
\end{eqnarray*}   
We may choose $r_i \rightarrow +\infty$ such that the right-hand side of this inequality is bounded. Thus $f_X^{+}$ is also  integrable and therefore $f_X$ is integrable.

	\section{Examples}\label{example}

In this section  we present four examples related to  the hypothesis we assume and we discuss their relation with the existing results; more precisely, we provide different type of examples satisfying our hypothesis which are not covered by previous results. 

In the first example we construct a smooth vector field $X$ that satisfies the hypothesis of Theorem \ref{teokarp}, $f_X$ is integrable on $SM$ but $|X|$ is non-integrable on $M$.

In the second example we construct a complete non-compact Riemannian manifold with finite volume $M_{\alpha}^{3}$ and a smooth vector field $\overline{Z}$ that does not satisfy the hypothesis of Theorem \ref{teokarp},  but  $f_X$ is integrable on $SM$. Furthermore, $|\overline{Z}|^p$ is not integrable for every $p \geq 1$. This example shows that the condition of integrability of the function $f_X$ is really different of the sufficient conditions used in \cite{dihe:newdir}, \cite{l:karp}, \cite{go:tro} and \cite{val:ve} to ensure that the integral of the divergence is null. 

Furthermore, in the third example  we construct a complete Riemannian manifold with infinite volume $M$ and a smooth vector field $\overline{U}$ that satisfies the hypothesis of Theorem \ref{infinito2} but  does not satisfy the sufficient conditions used in \cite{dihe:newdir}, \cite{l:karp}, \cite{go:tro} and \cite{val:ve} to ensure that the integral of the divergence is null.

When the Riemannian manifold is compact the integral of the divergence of a $C^1$ vector field $X$ is zero. An interesting question is to know whether this phenomenon also occurs
in non-compact Riemannian manifold with finite volume for the vector fields whose divergence is integrable. In the last example we will answer this question by constructing an example of a complete Riemannian manifold non-compact with finite volume $N$ and a smooth vector field $Z$ on $M$ such that ${\rm div}\, Z$ is integrable on $M$, but 
$$  \displaystyle\int_{M}  {\rm div}\, X \, d\nu_{g} > 0.$$

	\begin{example}
		Let $N\subset\mathbb{R}^3$ be the surface obtained by rotating the graph of the function 
		$f(x)= \displaystyle\frac{1}{1+ x^2}$, where $x\in\mathbb{R}$, around the axis $x$ with the usual metric. Then, $S$ is a complete 
		non-compact surface. 
		
		We will show below that $S$ has finite area. We have
		\[
		\nu_g(N)=2\pi\int_{-\infty}^{+\infty}\displaystyle\frac{1}{1+x^2}\sqrt{1+\displaystyle\frac{4x^2}{(1+x^2)^4}} \, dx
		=4\pi\int_{0}^{+\infty}\displaystyle\frac{1}{1+x^2}\sqrt{1+\displaystyle\frac{4x^2}{(1+x^2)^4}}  \, dx.
		\]   	
		Since $\displaystyle\lim_{x\to+\infty}   \displaystyle\frac{4x^2}{(1+x^2)^4}    =0$ there exists $x_0\in
		\mathbb{R}$ such that $ \displaystyle\frac{4x^2}{(1+x^2)^4}  \leq3$, for all $x\geq x_0$. Therefore, 
		\[
		\nu_g(N)\leq4\pi\int_{0}^{x_0}    \displaystyle\frac{1}{1+x^2}\sqrt{1+\displaystyle\frac{4x^2}{(1+x^2)^4}}  \,   dx
		+8\pi\int_{x_0}^{+\infty} \displaystyle\frac{1}{1+x^2} \, dx <    +\infty.
		\]
		
		We can also write $N = G^{-1}(\{0\})$ where $G: \mathbb{R}^3 \rightarrow \mathbb{R}$ is given by 
		$$G(x,y,z) = y^2+z^2-\displaystyle\frac{1}{(1+x^2)^2}.$$
		Hence, if $p=(x,y,z) \in M$ then  
		$$T_pN = \Big\{(a,b,c):g\Big((a,b,c),\Big(\displaystyle\frac{4x}{(1+x^2)^3},2y,2z\Big)\Big) = 0  \Big\}.$$
		
		Consider the smooth vector field $W(x,y,z)= x(1+x^2) \cdot (0,-z,y) $.  The function $|W|$ is non-integrable because
		$$      \displaystyle\int_{N}|W| \,  d\nu_g         = \displaystyle\lim_{r\to+\infty} \displaystyle\int_0^{2\pi} \displaystyle\int_{-r}^{r} \displaystyle\frac{|s|}{1+s^2}\sqrt{1+\displaystyle\frac{4s^2}{(1+s^2)^4}} \, ds dt  = + \infty.          $$ 
		
		On the other hand, given $v=(a,b,c) \in T_pN$ with 
		$a^2+b^2+c^2=1$ follows that
		\begin{eqnarray*}
			f_{W}(p,v) &=& g(\nabla_v W,v)\\
			&=& a(1+3x^2)(-zb+cy).\\
		\end{eqnarray*}
		Hence, ${\rm div}\, W = 0$ and
		\begin{eqnarray*}
			|f_{W}(p,v)| &\leq& (1+3x^2)\sqrt{b^2+c^2} \sqrt{z^2+y^2}\\
			& \leq& \displaystyle\frac{1+3x^2}{1+x^2}\\
			& \leq & 3.\\
		\end{eqnarray*}
		Therefore $f_W$ is integrable on $SN$.

		Now consider the points $p=(0,1,0) $, $q=(x,y,z)$ and $q_1 = \Big(x,\displaystyle\frac{1}{1+x^2},0\Big)$ in $M$. Let $d$ be the distance on $M$. We have 
		$$ \sqrt{x^2+ (y-1)^2+z^2}                \leq   d(p,q) \leq  d(p,q_1) + d(q_1,q).$$
		
		

		Observe that there exists $C > 1$ such  that
		$$   |x|    \leq  d(p,q) \leq C|x| + \displaystyle\frac{\pi}{1+x^2} \leq C|x| + \pi.$$
		
		For each $r$ consider the set $ F_r$ defined by, 
		$$F_r=\{(x,y,z) \in M: r <|x| \leq 2Cr + 2\pi\}.$$

		From the above inequalities it follows that $  B(2Cr + 2\pi) /B(Cr+\pi) \subset F_r$. Hence, for $r$ large we get
		\begin{eqnarray*}
			\displaystyle\int_{  B(2Cr + 2\pi)/B(Cr +\pi) }       |W| d\nu_g &\leq& \displaystyle\int_{F_r} |W| \, d\nu_g \\
			&=& 4\pi \displaystyle\int_{r   }^{2Cr + 2\pi} \displaystyle\frac{|s|}{1+s^2}\sqrt{1+\displaystyle\frac{4s^2}{(1+s^2)^4}} \, ds\\
			& \leq& 8\pi \displaystyle\int_{r   }^{2Cr+2\pi} \displaystyle\frac{|s|}{1+s^2} \, ds\\
			&=& 4\pi(\log (2Cr + 2\pi) -  \log r).\\
		\end{eqnarray*}
		Therefore,
		
		$$    \displaystyle\liminf_{r\rightarrow +\infty} \displaystyle\frac{1}{r} \displaystyle\int_{  B(2r)/B(r) }       |W| \, d\nu_g    \leq  \displaystyle\lim_{r\rightarrow +\infty} \displaystyle\frac{4\pi(\log(2Cr+2\pi)- \log r)}{r}    = 0  .     $$

	\end{example}

	
\begin{example}
	Let $(M,g)$ be the warped product  $\mathbb{H}^2 \times_{h} \mathbb{S}^1$ where $\mathbb{H}^2$ is the \linebreak hyperbolic plane and $\mathbb{S}^1$ is the unit circle. Fix a point $(y,a) \in M$ and \linebreak consider the real function $h: \mathbb{H}^2 \rightarrow (0, \infty)$ defined by $h(p) = b(d(p,y))$, where $d$ denotes the distance in $\mathbb{H}^2$ and $b: \mathbb{R} \rightarrow (0,\infty)$ is a $C^{\infty}$ function such that $b(r) = a > 0$ if $|r| < 1$, $\displaystyle\lim_{r\rightarrow +\infty} b(r) = 0$, $\displaystyle\int_{0}^{\infty} b(r) \cdot \sinh r \, dr < \infty$ and $\displaystyle\lim_{r\rightarrow +\infty} (\sinh r)^2 \cdot b(r)  > 0$. 
	
	 Now consider the parametrization 
	$\Psi: (0, \infty) \times (0,2\pi) \times (0,2\pi) \rightarrow M$, given by $\Psi(r, \theta, t) = ( \varphi(r,\theta), \eta (t))$, where $ \varphi(r,\theta) = \exp_y (r \gamma(\theta))  $, $\gamma(\theta)$ is a circle of radius 1 in $T_y \mathbb{H}^2$ with center 0 parametrized by the central angle $\theta$ and $\eta(t)$ is the parametrization of $\mathbb{S}^1$ by the central angle. We have,
			$$
		\nu_g(M) = 4\pi^2\displaystyle\int_{0}^{\infty}\displaystyle \sinh r \cdot b(r)  \, dr.
$$
	Therefore $(M,g)$ is a complete non-compact Riemannian manifold  with finite volume. Now consider the unique smooth vector field $Z$ on $\mathbb{H}^2$ such that $Z(   \exp_y (r \gamma(\theta))  = \displaystyle\frac{\partial  \varphi}{\partial \theta} (r,\theta)     $ for every $(r,\theta) \in (0, \infty) \times (0,2\pi)$. From Koszul formula it follows that $Z$ is a Killing vector field.
	
	On the other hand, from Proposition \ref{conexao} it follows that the horizontal lift of $Z$ to $M$ is also a Killing vector field. Observe that,
	 $$|| \overline{Z}(  \Psi(r, \theta, t) ) || = \Big|\displaystyle\frac{\varphi}{\partial \theta} (r,\theta)\Big| = \sinh r,$$
	 and
	 $$   \displaystyle\int_{M}|\overline{Z}|^{p} d\nu_g = 4\pi^2\displaystyle\int_{0}^{\infty}\displaystyle (\sinh r)^{p+1} \cdot (b(r)) \, dr.                          $$
	Thus  $|\overline{Z}|^{p}$ is not integrable for every $p \geq 1$.
	Let $\overline{d}$ be the distance on $M$. Given $(p,b) \in M$ notice that
	$$              d(y,p) \leq \overline{d}(  (y,a),(p,b))  \leq    \overline{d}(  (y,a),(p,a))     + \overline{d}(  (p,a),(p,b))    \leq d(y,p) + 2\pi b(   d(p,y)).                  $$
	
	Let $C = \displaystyle\sup_{r \geq 0} b(r)$ and for each $R > \max\{4\pi C,3\} $, let $ A_R$ be the set defined by 
	$$A_R=\{(p,b) \in   M_{\alpha}^3 :  R < d(y,p) < \displaystyle\frac{3}{2}R\}.$$It follows from the above inequalities that  $A_R \subset B(3/2R)/B(R)  $, where $B(R)$ denotes the geodesic ball of radius $R$ and center $(y,a)$. Hence,
	\begin{eqnarray*}
		\displaystyle\int_{  B(2R)/B(R) }       |\overline{Z}| \, d\nu_g &\geq& \displaystyle\int_{A_R} |\overline{Z}| \, d\nu_g \\
		&=& 4\pi^2 \displaystyle\int_{R   }^{3R/2} (\sinh r)^2 \cdot b(r) \, dr.\\
	\end{eqnarray*}
	Which implies that
	
	$$  \displaystyle\liminf_{R\rightarrow +\infty} \displaystyle\frac{1}{R} \displaystyle\int_{  B(2R)/B(R) }       |\overline{Z}| \, d\nu_g  > 0.           $$
\end{example}
\begin{example}
Let $(M,g)$ be the warped product  $\mathbb{H}^2 \times_{h} \mathbb{S}^1$ where $\mathbb{H}^2$ is the hyperbolic plane and $\mathbb{S}^1$ is the unit circle. Fix a point $(y,a) \in M$ and consider the real function $b: \mathbb{H}^2 \rightarrow (0, \infty)$ defined by $h= b(d(p,y))$, where $d$ denotes the distance in $\mathbb{H}^2$ and $b: \mathbb{R} \rightarrow (0,\infty)$ is a $C^{\infty}$ function such that $b(r) = a > 0$ if $|r| < 1$, $\displaystyle\lim_{r\rightarrow +\infty} b(r) = 0            $ and $\displaystyle\lim_{r\rightarrow +\infty} \sinh r \cdot (b(r))^{p}  > 0$, for every $p \geq 1$. Now consider a smooth vector field $U$ on $\mathbb{S}^1$ such that $ \langle U, U \rangle = 1$ with respect to usual metric of  $\mathbb{S}^1$. Following the same notation of the previous example, by definition we have that  $|\overline{U}(  \Psi(r, \theta, t) )| = b(r)$, where $\overline{U}$ denotes the vertical lift of $U$ to $M$. From Proposition \ref{conexao} it follows that $\overline{U}$ is a Killing vector field on $M$ thus $f_{\overline{U}}$ is integrable on $SM$. This shows that the vector field $\overline{U}$ satisfies the hypothesis of Theorem \ref{infinito2}. 

On the other hand, proceeding in the same way of the previous example we have
$$ \nu_g(M) = 4\pi^2\displaystyle\int_{0}^{\infty}\displaystyle \sinh r \cdot b(r)  \, dr,$$
and

$$  \displaystyle\int_{M}|\overline{U}|^{p} d\nu_g = 4\pi^2\displaystyle\int_{0}^{\infty}\displaystyle \sinh r \cdot (b(r))^{p+1}  \, dr.                             $$
Since $\displaystyle\lim_{r\rightarrow +\infty} \sinh r  \cdot (b(r) )^{q}  > 0$ for every $q \geq 1$ it follows that the Riemannian manifold $M$ has infinite volume and that $|\overline{U}|^{q}$ is not integrable for every $q \geq 1$. By the discussion of the previous example we have

	$$    \displaystyle\lim_{R\rightarrow +\infty} \displaystyle\frac{1}{R} \displaystyle\int_{  B(2R)/B(R) }       |\overline{U}| \, d\nu_g    \geq  \displaystyle\lim_{R\rightarrow +\infty} \displaystyle\frac{4\pi^2}{R}  \displaystyle\int_{R   }^{3R/2} \sinh r \cdot (b(r))^{2} \, dr.   $$
	Therefore,  
	$$  \displaystyle\liminf_{R\rightarrow +\infty} \displaystyle\frac{1}{R} \displaystyle\int_{  B(2R)/B(R) }       |\overline{U}| \, d\nu_g  > 0.           $$

\end{example}
\begin{example}
	Let $\mathbb{L}^{3}$ be the $3$-dimensional Lorentz space, that is, the real
	vector space $\mathbb{R}^{3}$ endowed with the Lorentzian metric
	\[
	\langle p,q\rangle=p_1q_1+p_2q_2-p_3q_3.
	\]
	The $2$-dimensional hyperbolic space
	\[
	\mathbb{H}^{2}=\{p\in\mathbb{L}^{3};\,\langle p,p\rangle=-1,\,p_{3}\geq1\},
	\]
	as it is well known, is a space like hypersurface in $\mathbb{L}^{3}$, that is, the
	induced metric via the inclusion $\iota:\mathbb{H}^{2}\hookrightarrow\mathbb{L}^{3}$ is a
	Riemannian metric on $\mathbb{H}^{2}$. Consider the hyperbolic space $\mathbb{H}^2$ 
	with the orientation given by the normal vector field $N(x,y,z)=(-x,-y,-z)$.
	Let $X\in\mathfrak{X}(\mathbb{H}^2)$ be the vector field defined as the projection 
	of the vector $-e_3$ on each tangent plane of $\mathbb{H}^2$, i.e.,
	\begin{eqnarray*}
		X(x,y,z)=-e_3+\langle -e_3,N(x,y,z)\rangle N(x,y,z)=(xz,yz,z^2-1).
	\end{eqnarray*}
	It is not difficult to see that $X$ is a conformal vector field with conformal factor
	$\psi(x,y,z)=z>0$, that is, $\langle \nabla_VX,U\rangle+\langle\nabla_UX,V\rangle     =    2\psi\langle V,U\rangle   $, where U and V are vector fields on M and $\nabla$ is the Levi-Civita connection on $\mathbb{H}^{2}$.
	
	Let $(M,g)$ be the warped product $\mathbb{H}^2 \times_f \mathbb{S}^1$, where the function \linebreak $f:\mathbb{H}^2 \rightarrow \mathbb{R}$ is given by $f(x,y,z) = 1/z^2$. Now consider the parametrization $\Psi: \mathbb{R}^2\times (0,2\pi) \rightarrow M$, given by $\Psi(x, y, t) = ( \varphi(x,y), \eta (t))$, where $ \varphi(x,y) = (x, y, \sqrt{1+x^2+y^2})  $ and  $\eta(t)$ is the parametrization of $\mathbb{S}^1$ by the central angle. Observe that,
	\begin{eqnarray*}
		\nu_g(M) &=& 2\pi \displaystyle\int_{\mathbb{R}^2} \displaystyle\frac{1}{(1+x^2+y^2)^{3/2}} \, dxdy\\
		&=& 4\pi^2 \displaystyle\int_{0}^{+\infty} \displaystyle\frac{r}{(1+r^2)^{3/2}} \, dr\\
		&=& 4\pi^2.\\
	\end{eqnarray*}
	Therefore $M$ is a complete Riemannian manifold, non-compact with finite volume. Let $Z$ be the horizontal lift of $X$ to $M$. From Proposition \ref{conexao}, it follows that
	\begin{eqnarray*}
		{\rm div}\, Z &=& {\rm div}\, X + Xf/f. 
	\end{eqnarray*}     
	On the other hand, we can write $X = xz \displaystyle\frac{\partial \varphi}{\partial x} + yz  \displaystyle\frac{\partial \varphi}{\partial y}$. Hence  
	$$   Xf = -\displaystyle\frac{2z(x^2+y^2)}{(1+x^2+y^2)^2}.          $$           
	Therefore,
	\begin{eqnarray*}
		{\rm div}\, Z = \displaystyle\frac{2}{\sqrt{    1+x^2+y^2    }}.
	\end{eqnarray*}  
	The $ 	{\rm div}\, Z$ is integrable on $M$. In fact, we have
	\begin{eqnarray*}
		\displaystyle\int_{M} {\rm div}\, Z d\nu_g &=&  2\pi \displaystyle\int_{\mathbb{R}^2}  \displaystyle\frac{2}{(1+x^2+y^2)^2} \, dxdy \\
		&=&  4\pi^2 \displaystyle\int_{0}^{+\infty}  \displaystyle\frac{2r}{(1+r^2)^2} \, dr\\
		&=& 4\pi^2.
	\end{eqnarray*}
\end{example}

\section{Applications}
In this subsection we present some applications related to potential theory. According to the terminology introduced by Rigoli-Setti in \cite{ri:seti}. The $\varphi$-Laplacian of a function $u$ is the nonlinear, divergence form operator defined by
$$  L_{\varphi}(u) = {\rm div}\, \big(|{\rm grad}\, u|^{-1}\varphi(|{\rm grad}\, u|)    \cdot {\rm grad}\, u \big)                     $$
where $\varphi \in C^{0}([0, + \infty)) \cap C^{1}((0, + \infty)) $ satisfies the following structural conditions:
\begin{enumerate}
	\item $\varphi(0) = 0$,
	\item $\varphi(t)> 0$  $\forall t > 0$,
	\item $\varphi(t) \leq At^{r-1}$, for some constants $A,r > 1$.
\end{enumerate}
If $\varphi(t) = t^{p-1}$, $1 < p < +\infty$, the $\varphi$-Laplacian corresponds  to the usual $p-$Laplace operator
$$ \Delta_p u = {\rm       div}\, \big( |{\rm grad}\, u|^{p-2}  \cdot {\rm grad}\, u   \big)                   .$$
In particular, if $p=2$, the $p-$Laplace operator is nothing but the Laplace-Beltrami operator. When $\varphi(t) = \displaystyle\frac{t}{\sqrt{1+t^2}}$, the $\varphi$-Laplacian corresponds to the mean curvature operator 
$$     H_u = {\rm       div}\, \Big(   \displaystyle\frac{{\rm grad}\, u}{\sqrt{1+ {\rm grad}\, u|^{2} }}      \Big). $$ For more details see \cite{pi:setti}. In the theorem below we demonstrate a global comparison principle for complete Riemannian manifolds with infinite volume whose geodesic flow is recurrent. Similar results can be found in \cite{val:ve}, \cite{pi:setti} and \cite{holo:pigola}.

In \cite{holo:pigola} the authors proved a global comparison result for the $p$-Laplacian on a $p$-parabolic manifold and in \cite{pi:setti} we also can find results on a $p$-parabolic manifold. In particular, for closed manifolds and complete Riemannian manifolds with finite volume. It is worth mentioning that the next theorem can be applied to some hyperbolic surfaces with infinite volume and these manifolds are not $p$-parabolic.

\begin{lemma}\label{lemaincri}(Lemma 1.14,\,\cite{pi:setti})
	If $\varphi$ is strictly increasing	and satisfies the  above assumptions then the following holds. Let $(V, \langle, \rangle)$ be an n-dimensional, real vector space endowed with the scalar product $\langle, \rangle$. Then, for every $\xi,\eta \in V$,
	$$      h(\xi,\eta) = \big\langle |\xi|^{-1}\varphi(|\xi|)\xi - |\eta|^{-1}\varphi(|\eta|)\eta, \xi-\eta \big\rangle  \geq0,                                                $$
	with equality holding if and only if $\xi=\eta$.
\end{lemma}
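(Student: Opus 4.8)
The plan is to reduce the stated inequality to a purely one-dimensional monotonicity property of $\varphi$, with the Cauchy--Schwarz inequality doing all the work that involves the scalar product. First I would set $a=|\xi|$ and $b=|\eta|$ and expand $h(\xi,\eta)$ bilinearly; writing $|\xi|^{-1}\varphi(|\xi|)\xi$ as $0$ when $\xi=0$ (legitimate since $\varphi(0)=0$, so this vector has norm $\varphi(|\xi|)\to 0$), one gets
$$ h(\xi,\eta)=a\,\varphi(a)+b\,\varphi(b)-\Big(\frac{\varphi(a)}{a}+\frac{\varphi(b)}{b}\Big)\langle\xi,\eta\rangle . $$
Since $\varphi(t)>0$ for $t>0$, the coefficient in front of $\langle\xi,\eta\rangle$ is nonnegative, so estimating $\langle\xi,\eta\rangle$ from above by $|\xi||\eta|=ab$ (Cauchy--Schwarz) can only decrease the right-hand side:
$$ h(\xi,\eta)\ \ge\ a\,\varphi(a)+b\,\varphi(b)-\Big(\frac{\varphi(a)}{a}+\frac{\varphi(b)}{b}\Big)ab \ =\ (a-b)\big(\varphi(a)-\varphi(b)\big). $$

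Because $\varphi$ is strictly increasing, $(a-b)(\varphi(a)-\varphi(b))\ge 0$ for all $a,b\ge 0$, which gives $h(\xi,\eta)\ge 0$. For the equality statement I would simply track the two inequalities just used: $h(\xi,\eta)=0$ forces $(a-b)(\varphi(a)-\varphi(b))=0$, and strict monotonicity then forces $a=b$; it also forces equality in Cauchy--Schwarz, i.e. $\langle\xi,\eta\rangle=|\xi||\eta|$, so $\xi$ and $\eta$ are nonnegative multiples of one another. Combined with $|\xi|=|\eta|$ this yields $\xi=\eta$, and the converse implication is immediate. The degenerate cases $\xi=0$ or $\eta=0$ are handled by the same computation, since then $\langle\xi,\eta\rangle=0$ and the surviving term $b\,\varphi(b)$ (resp.\ $a\,\varphi(a)$) vanishes only when $b=0$ (resp.\ $a=0$).

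There is no real obstacle in this argument; the only delicate points are the convention for the singular factor $|\xi|^{-1}\varphi(|\xi|)$ at the origin and the bookkeeping in the equality case. It is worth noting that the growth condition $\varphi(t)\le At^{r-1}$ from the structural assumptions is not used here: only $\varphi(0)=0$, positivity of $\varphi$ on $(0,\infty)$, and strict monotonicity enter the proof.
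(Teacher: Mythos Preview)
The paper does not give its own proof of this lemma; it is quoted from \cite{pi:setti} and used as a black box in the application that follows. Your argument is correct and is the standard one: expand the inner product bilinearly, use Cauchy--Schwarz to replace $\langle\xi,\eta\rangle$ by $|\xi|\,|\eta|$, and factor the resulting scalar expression as $(|\xi|-|\eta|)\big(\varphi(|\xi|)-\varphi(|\eta|)\big)\ge 0$. Your treatment of the equality case and of the degenerate situations $\xi=0$ or $\eta=0$ is also correct, and you are right that the growth condition $\varphi(t)\le At^{r-1}$ plays no role in this lemma.
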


\begin{theorem}
	Assume that $(M,g)$ is a complete Riemannian manifold with infinite volume whose geodesic flow is recurrent with respect to Liouville measure. Consider the $\varphi$-Laplace operator $L_{\varphi}$, where $\varphi$ is strictly increasing and satisfies the  above assumptions. If $u,v \in C^2(M)$ are solutions of $   L_{\varphi}(u) \geq   L_{\varphi}(v) $, $f_X,f_Y$ are integrable on SM where $X= |{\rm grad}\, u|^{-1}\varphi(|{\rm grad}\, u|)    \cdot {\rm grad}\, u $, $Y = |{\rm grad}\, v|^{-1}\varphi(|{\rm grad}\, v|)    \cdot {\rm grad}\, v $  and  $|{\rm grad}\, u|$,$|{\rm grad}\, v| \in L^{r}(M)$  then $u - v \equiv const.$
\end{theorem}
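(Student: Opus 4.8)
The strategy is to reduce the comparison statement to an application of Theorem~\ref{infinito} applied to a suitable vector field built from $u$ and $v$. Set $X = |{\rm grad}\, u|^{-1}\varphi(|{\rm grad}\, u|)\cdot{\rm grad}\, u$ and $Y = |{\rm grad}\, v|^{-1}\varphi(|{\rm grad}\, v|)\cdot{\rm grad}\, v$, so that $L_\varphi(u) = {\rm div}\, X$ and $L_\varphi(v) = {\rm div}\, Y$, and consider $W := (u-v)(X-Y)$. A direct computation gives
\[
{\rm div}\, W = (u-v)\big({\rm div}\, X - {\rm div}\, Y\big) + g\big({\rm grad}(u-v),\, X-Y\big) = (u-v)\big(L_\varphi(u)-L_\varphi(v)\big) + h\big({\rm grad}\, u,\,{\rm grad}\, v\big),
\]
where $h$ is the nonnegative quantity from Lemma~\ref{lemaincri}. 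Here one must be a touch careful: to invoke Theorem~\ref{infinito} one needs $W$ to be merely $C^1$, which it is since $u,v\in C^2$, and one needs $f_W$ integrable on $SM$.

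The first key step is to verify the integrability hypothesis $f_W\in L^1(SM)$. Expanding $f_W(p,v) = g(\nabla_v W, v)$ using $W = (u-v)(X-Y)$ and the product rule for $\nabla$, one gets a sum of terms of the shape $g({\rm grad}(u-v),v)\cdot g(X-Y,v)$ and $(u-v)\cdot f_{X-Y}(p,v)$. The second type is controlled because $f_X, f_Y$ are integrable by hypothesis and $u-v$ — hmm, this needs $u-v$ bounded, which is not assumed; so instead I would argue on $SM$ by estimating pointwise $|f_W|$ in terms of $|{\rm grad}\, u|$, $|{\rm grad}\, v|$ and the structural bound $\varphi(t)\le At^{r-1}$, which yields $|X|\le A|{\rm grad}\, u|^{r-1}$, hence $|X|\in L^{r/(r-1)}(M)$ since $|{\rm grad}\, u|\in L^r(M)$; combined with $f_X$ integrable this is exactly the setup where Proposition~\ref{suficiente} applies to conclude $f_X$ integrable and, more to the point, lets one bound the mixed terms. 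The cleanest route is: show directly that $f_W$ has an integral (say $f_W^-\in L^1$) and that $\liminf_{r\to\infty}\frac1r\int_{B(2r)\setminus B(r)}|W|\,d\nu_g<\infty$ using $|W|\le |u-v|\,(|X|+|Y|)$ and the $L^{r/(r-1)}$ bounds together with a Caccioppoli-type estimate for $|u-v|$; then Proposition~\ref{suficiente} gives $f_W\in L^1(SM)$.

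The second step is the conclusion. Granting $f_W\in L^1(SM)$, Theorem~\ref{infinito} gives ${\rm div}\, W\in L^1(M)$ and $\int_M {\rm div}\, W\, d\nu_g = 0$. But ${\rm div}\, W = (u-v)(L_\varphi(u)-L_\varphi(v)) + h({\rm grad}\, u,{\rm grad}\, v)$, and both summands are nonnegative: the first because $u-v$ and $L_\varphi(u)-L_\varphi(v)$ — wait, that sign is wrong in general. The correct sign bookkeeping: where $u-v\ge 0$ one uses $L_\varphi(u)\ge L_\varphi(v)$, but where $u-v<0$ the product is $\le 0$. The standard fix is to apply the argument not to $u-v$ but to $(u-v)^+$ or, better, to truncate: replace $u-v$ by $\min\{(u-v)^+,k\}$-type cutoffs, or simply note that the integrand $(u-v)(L_\varphi u - L_\varphi v)$ need not be signed, so instead one applies the divergence theorem to $W_+ := (u-v)^+(X-Y)$. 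Then ${\rm div}\, W_+ = \mathbf{1}_{\{u>v\}}\big[(u-v)(L_\varphi u-L_\varphi v) + h({\rm grad}\, u,{\rm grad}\, v)\big] + (\text{boundary/nonsmooth terms on }\{u=v\})$; on $\{u>v\}$ both bracketed terms are $\ge 0$. Hence $\int_M {\rm div}\, W_+ = 0$ forces $h({\rm grad}\, u,{\rm grad}\, v) = 0$ a.e.\ on $\{u>v\}$, so by the equality case of Lemma~\ref{lemaincri}, ${\rm grad}\, u = {\rm grad}\, v$ a.e.\ there, whence ${\rm grad}(u-v)^+ = 0$ and $(u-v)^+$ is locally constant; running the same argument with roles of $u,v$ swapped gives $(v-u)^+$ locally constant, and connectedness of $M$ yields $u-v\equiv\mathrm{const}$.

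\textbf{Main obstacle.} The genuinely delicate point is not the ergodic-theoretic input (that is packaged in Theorem~\ref{infinito}) but establishing $f_W\in L^1(SM)$ for the truncated field $W_+$: truncation destroys $C^1$ regularity across $\{u=v\}$, so one must either work with a smooth approximation of $t\mapsto t^+$ and pass to the limit, controlling the extra terms uniformly via the $L^{r/(r-1)}$ bounds on $X,Y$ and Proposition~\ref{suficiente}, or verify that the nonsmooth contribution on $\{u=v\}$ integrates to zero. Handling this approximation cleanly — in particular checking that the cutoff does not spoil the $\liminf$ growth condition needed to invoke Proposition~\ref{suficiente} — is where the real work lies.
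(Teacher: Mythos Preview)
Your plan has the right skeleton --- build a vector field from $u,v$, apply Theorem~\ref{infinito}, and use Lemma~\ref{lemaincri} for the sign --- but there are two genuine gaps, and the paper's proof closes both with a single device you are missing.

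\textbf{Gap 1: integrability of $f_W$.} For $W = (u-v)(X-Y)$ you correctly observe that $f_W$ contains the term $(u-v)(f_X-f_Y)$ and that this needs $u-v$ bounded. None of your proposed workarounds succeeds. Proposition~\ref{suficiente} requires both a growth bound on $|W| = |u-v|\,|X-Y|$ and that one of $f_W^{\pm}$ already be integrable, and neither is available without control on $|u-v|$; the ``Caccioppoli-type estimate'' is not justified in this setting. Passing to $(u-v)^+$ does not help either, since $(u-v)^+$ is still unbounded (and smoothing $t\mapsto t^+$ does not change that). The bounded truncation $\min\{(u-v)^+,k\}$ that you mention in passing would indeed repair this step --- but then Gap~2 below still blocks the conclusion.

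\textbf{Gap 2: the swap argument.} Even granting $h({\rm grad}\,u,{\rm grad}\,v)=0$ on $\{u>v\}$, your conclusion ``run the same argument with roles of $u,v$ swapped'' fails: the hypothesis $L_\varphi(u)\ge L_\varphi(v)$ is one-sided, so you cannot interchange $u$ and $v$ to treat $\{v>u\}$. Your argument yields only the dichotomy ``$u-v$ is a positive constant'' or ``$u\le v$ everywhere'', with no way to finish in the second case.

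\textbf{What the paper does.} Fix any $x_0$ and set $A=u(x_0)-v(x_0)$. Choose a smooth $\alpha:\mathbb{R}\to[0,\infty)$ with $\alpha\equiv 0$ on $(-\infty,A-1]$, $\alpha\equiv 1$ on $[A+1,\infty)$, and $\alpha'>0$ on $(A-1,A+1)$, and put $Z=\alpha(u-v)\,(X-Y)$. Because $\alpha$ and $\alpha'$ are \emph{bounded}, $|f_Z|$ is dominated by $C\,|{\rm grad}(u-v)|\,|X-Y| + |f_X|+|f_Y|$; the first summand is controlled by $|{\rm grad}\,u|^r+|{\rm grad}\,v|^r$ via the structural bound $\varphi(t)\le At^{r-1}$ and Young's inequality, so $f_Z\in L^1(SM)$ directly, with no appeal to Proposition~\ref{suficiente}. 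Since $\alpha\ge0$ and $\alpha'\ge0$, one has ${\rm div}\,Z=\alpha'(u-v)\,h({\rm grad}\,u,{\rm grad}\,v)+\alpha(u-v)\big(L_\varphi u-L_\varphi v\big)\ge0$, and Theorem~\ref{infinito} then forces $h=0$ on the component $\Omega_A$ of $\{A-1<u-v<A+1\}$ containing $x_0$; hence $u-v\equiv A$ on $\Omega_A$, which makes $\Omega_A$ clopen, so $\Omega_A=M$. Centering the cutoff at an \emph{arbitrary} level $A$, rather than at $0$, is precisely what replaces your symmetry argument and handles all cases at once.
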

\begin{proof}
	Fix any $x_0 \in M$. Let $A = u(x_0) - v(x_0)$ and define $\Omega_A$ to be the connected component of the open set $\{x \in M: A-1 < u(x) - v(x) < A+1\}$ which contains $x_0$. Let $\alpha:\mathbb{R} \rightarrow [0, + \infty)$ be a smooth function that satisfies the following conditions:
	\begin{enumerate}
		\item $\alpha \equiv 0$ if $t \leq A-1$,
		\item $\alpha \equiv 1$ if $t \geq A+1$,
		\item $\alpha' > 0$ if $A-1 < t < A+1$,
	\end{enumerate}	
	Now consider, the vector field $Z = \alpha \circ (u-v) \cdot (X-Y)$. We have
	$$  f_{Z}(p,w) = \alpha'\circ (u-v) g( {\rm grad}\, u  -{\rm grad}\, v,w)  g(X-Y,w) + \alpha \circ (u-v)(f_X(p,w)-f_Y(p,w)  ).                      $$
	Observe that 
	\begin{eqnarray*}|f_{Z}| &\leq& C.|   {\rm grad}\, u  -{\rm grad}\, v||X-Y| + |f_X|+|f_Y|\\
		&\leq& C(.|   {\rm grad}\, u | +|   {\rm grad}\, v| )   (|X|+|Y|) + |f_X| +|f_Y|\\
		&\leq& C\cdot A( |{\rm grad}\, u |^r +  |{\rm grad}\, v |^r + |{\rm grad}\, v ||{\rm grad}\, u |^{r-1} +  |{\rm grad}\, u ||{\rm grad}\, v |^{r-1} )  \\
		&+& |f_X| +|f_Y|.\\                                 
	\end{eqnarray*}
	By Young's inequality we get
	$$  |{\rm grad}\, u ||{\rm grad}\, v |^{r-1} \leq \displaystyle\frac{ |{\rm grad}\, u |^{r }}{r} +  \displaystyle\frac{ (r-1)|{\rm grad}\, v |^{r }}{r}                                        $$
	and 
	$$  |{\rm grad}\, v ||{\rm grad}\, u |^{r-1} \leq \displaystyle\frac{ |{\rm grad}\, v |^{r }}{r} +  \displaystyle\frac{ (r-1)|{\rm grad}\, u |^{r }}{r}.                                        $$Therefore $f_{Z}$ is integrable, by Theorem \ref{infinito} it follows that ${\rm div}\, Z$ is integrable on $M$ and
	
	$$  \displaystyle\int_{M}  {\rm div}\, Z \,  d\nu_{g}    =0.$$
	On the other hand,
	$$  {\rm div}\, Z= \alpha'\circ (u-v) h( {\rm grad}\, u   ,  {\rm grad}\, v     )
	+ \alpha \circ(u-v)(  L_{\varphi}(u) -   L_{\varphi}(v)    ).$$ 
	By hypothesis, the second term is $\geq 0$. Thus by Lemma \ref{lemaincri}
	$$ 0 =    \displaystyle\int_{M}  {\rm div}\, Z d\nu_{g}  \geq   \displaystyle\int_{M} \alpha'\circ (u-v) h( {\rm grad}\, u   ,  {\rm grad}\, v     ) \, d\nu_{g} \geq 0.                        $$
	Since $\alpha'\circ (u-v) > 0$ on $\Omega_A$ it follows that $h( {\rm grad}\, u   ,  {\rm grad}\, v     ) =0$ on $\Omega_A$. Thus $u-v \equiv A$, on $\Omega_A$. Observe that the open set $\Omega_A$ is also closed. Since $M$ is connected which implies that $\Omega_A = M$ and then $u-v = A$ on $M$.
\end{proof}
An immediate consequence of this theorem is the following corollary.
\begin{corollary}
	Assume that $(M,g)$ is a complete Riemannian manifold with infinite volume whose geodesic flow is recurrent with respect to the Liouville measure. If $u \in C^2(M)$ is a subharmonic function on
	$M$ such that $|{\rm grad}\, u| \in L^2(M) $ and $f_X$ is integrable on $SM$ where $X = {\rm grad}\, u$ then $u$ is constant.
\end{corollary}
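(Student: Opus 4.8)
The plan is to deduce this statement as the special case of the preceding comparison theorem obtained by taking the operator $L_\varphi$ to be the Laplace--Beltrami operator and the competitor function $v$ to be constant. Concretely, I would choose $\varphi(t)=t$: this $\varphi$ is strictly increasing and satisfies the three structural conditions (with $A=1$ and $r=2$), and for this choice $L_\varphi=\Delta$. The associated vector field $X=|{\rm grad}\,u|^{-1}\varphi(|{\rm grad}\,u|)\cdot{\rm grad}\,u$ then reduces to ${\rm grad}\,u$, which is $C^1$ because $u\in C^2$, so it is a legitimate vector field for the theorem. This matches the hypothesis $X={\rm grad}\,u$ in the corollary, and the exponent $r=2$ matches the assumption $|{\rm grad}\,u|\in L^2(M)$.

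Next I would set $v\equiv 0$. Then $Y={\rm grad}\,v=0$, so $f_Y\equiv 0$ is trivially integrable on $SM$, and $|{\rm grad}\,v|=0\in L^2(M)$; thus all the hypotheses the theorem places on the pair $(u,v)$ are satisfied. Since $u$ is subharmonic we have $\Delta u\ge 0=\Delta v$, i.e. $L_\varphi(u)\ge L_\varphi(v)$. Together with the standing assumptions of the corollary (infinite volume, recurrent geodesic flow, $f_X$ integrable on $SM$, $|{\rm grad}\,u|\in L^2(M)$) this is exactly the setup of the theorem, which therefore yields $u-v\equiv{\rm const}$, that is, $u$ is constant.

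The only point meriting an explicit line of verification is that the constant function genuinely fulfils the regularity and integrability requirements imposed on $v$ (regularity of $Y$, integrability of $f_Y$ and of $|{\rm grad}\,v|$); this is immediate since all these objects vanish identically. I do not expect any real obstacle here: the corollary is a direct specialization, and once $\varphi(t)=t$ and $v\equiv 0$ are substituted, the mechanism of the theorem --- integrating ${\rm div}\,Z$ for $Z=\alpha\circ(u-v)\cdot(X-Y)$ via Theorem \ref{infinito} and then using the pointwise nonnegativity of $h({\rm grad}\,u,{\rm grad}\,v)$ from Lemma \ref{lemaincri} --- applies without change and forces $u-v$ to be locally constant, hence globally constant by connectedness of $M$.
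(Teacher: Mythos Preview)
Your proposal is correct and matches the paper's intent: the paper simply records the corollary as an immediate consequence of the preceding comparison theorem, and your specialization $\varphi(t)=t$ (so $L_\varphi=\Delta$, $r=2$) together with $v\equiv 0$ is precisely the intended one. All hypotheses of the theorem are verified exactly as you outline, and the conclusion $u-v\equiv\mathrm{const}$ gives that $u$ is constant.
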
	
\bibliography{divergencetheorem}

\providecommand{\MR}{\relax\ifhmode\unskip\space\fi MR }
\providecommand{\MRhref}[2]{%
  \href{http://www.ams.org/mathscinet-getitem?mr=#1}{#2}
}
\providecommand{\href}[2]{#2}
\begin{thebibliography}{10}

\bibitem{a:d}
J.~Aaronson and M.~Denker, \textsl{The poincar\'e series of $\mathbb{C}
  \backslash \mathbb{Z}$}, Ergodic Theory Dynam. Systems \textbf{19} (1999),
  no.~1, 1--20.

\bibitem{a:s}
J.~Aaronson and D.~Sullivan, \textsl{Rational ergodicity of geodesic flows},
  Ergod. Th. \& Dynam. Sys. \textbf{4} (1984), no.~2, 165--178.

\bibitem{dihe:newdir}
M.~Gaffney, \textsl{A special stoke's theorem for complete riemannian
  manifolds}, Ann. of Math \textbf{60} (1954), 140--145.

\bibitem{go:tro}
V.~Gol'dshtein and M.~Troyanov, \textsl{The kelvin-nevanlinna-royden criterion
  for p-parabolicity}, Math. Z. \textbf{232} (1999), 607--619.

\bibitem{flo:gui}
F.~F. Guimar\~aes, \textsl{The integral of the scalar curvature of complete
  manifolds without conjugate points}, Journal of Differential Geometry
  \textbf{36} (1992), 651--662.

\bibitem{holo:pigola}
I.~Holopainen, S.~Pigola and G.~Veronelli, \textsl{Global comparison principles
  for the p-laplace operator on riemannian manifolds}, Potential Anal.
  \textbf{34} (2011), 371--384.

\bibitem{hhw:HHW}
Pat Hooper, P.~Hubert and Barak Weiss, \textsl{Dynamics on the infinite
  staircase surface}, Dis. Cont. Dyn. Sys. \textbf{30} (2013), 4341--4347.

\bibitem{l:karp}
L.~Karp, \textsl{On stoke's theorem for noncompact manifolds}, Proc. Amer.
  Math. Soc \textbf{82} (1981), 487--490.

\bibitem{ly:sul}
T.~Lyons and D.~Sullivan, \textsl{Function theory, random paths and covering
  spaces}, J. Differential Geom. \textbf{31} (1984), 299--323.

\bibitem{p:nichols}
P.~Nicholls, \textsl{Transitivity properties of fuchsian groups}, Canad. J.
  Math. \textbf{28} (1976), 805--814.

\bibitem{o:neil}
B.~O'Neill, \textsl{Semi-riemannian geometry with applications to relativity},
  Academic Press, Londres, 1983.

\bibitem{gabriel:paternain}
G.~P. Paternain, \textsl{Geodesic flows}, Progress in Mathematics,
  Birkh$\ddot{\rm a}$user, 1999.

\bibitem{p:t}
K.~Petersen, \textsl{Ergodic theory}, Cambridge University Press, Cambridge,
  1983.

\bibitem{pi:setti}
S.~Pigola and A.~G. Setti, \textsl{Global divergence theorems in nonlinear pdes
  and geometry}, Ensaios Matem\'aticos [Mathematical Surveys], Vol. 26,
  Sociedade Brasileira de Matem\'atica, Rio de Janeiro, 2014.

\bibitem{ri:seti}
M.~Rigolli and A.G. Setti, \textsl{Liouville type theorems for
  $\phi$-subharmonic functions}, Rev. Mat. Iberoamericana \textbf{17} (2001),
  no.~3, 471--520.

\bibitem{sakay:sakay}
T.~Sakay, \textsl{Riemannian geometry}, Translations of Mathematical
  Monographs, \textbf{149} American Mathematical Society, 1996.

\bibitem{tro:tro}
M.~Troyanov, \textsl{Parabolicity of manifolds}, Siberian Adv. Math. \textbf{9}
  (1999), no.~4, 125--150.

\bibitem{val:ve}
D.~Valtorta and G.Veronelli, \textsl{Stoke's theorem, volume growth and
  parabolicity}, Tohoku Math. J. \textbf{63} (2011), 397--412.

\bibitem{yau:yau}
S.~T. Yau, \textsl{Some function-theoretic properties of complete riemannian
  manifolds and their applications to geometry}, Indiana Univ. Math. J.
  \textbf{25} (1976), 659--670.

\end{thebibliography}
\bibliographystyle{ijmart}

\end{document}